\numberwithin{equation}{section} % numbering if equations according to sections
\numberwithin{figure}{section}
\theoremstyle{plain}
\newtheorem{theorem}{\bf Theorem}[]
\newtheorem{corollary}[theorem]{Corollary}
\newtheorem{lemma}[theorem]{Lemma}
\newtheorem{proposition}[theorem]{Proposition}
\newtheorem{thmnonumber}{\bf Theorem}
\theoremstyle{definition}
\newtheorem{example}[theorem]{Example}
\newtheorem{remark}[theorem]{Remark}
\newtheorem{definition}[theorem]{Definition}
\newcommand\eqdef{\mathrel{\overset{\makebox[0pt]{\mbox{\normalfont\tiny\sffamily def}}}{=}}}
\definecolor{mypink}{RGB}{215, 5, 234}
\definecolor{lemonchiffon}{RGB}{255, 250, 205}
\renewcommand{\textdagger}{$**$}
\renewcommand{\textdaggerdbl}{$**$}
\renewcommand{\textparagraph}{$*$}
\begin{document}

\author{Bruno Benedetti \thanks{Supported by NSF Grant  1855165, ``Geometric Combinatorics and Discrete Morse Theory''.}\\
\small Dept. of Mathematics\\
\small University of Miami\\
\small bruno@math.miami.edu
\and Marta Pavelka\\
\small Dept. of Mathematics\\
\small University of Miami\\
\small pavelka@math.miami.edu}

% \date{}
\title{2-LC triangulated manifolds are exponentially many}
\maketitle

\begin{abstract}
We introduce ``$t$-LC triangulated manifolds'' as those triangulations obtainable from a tree of $d$-simplices by recursively identifying two boundary $(d-1)$-faces whose intersection has dimension at least $d-t-1$. 
The $t$-LC notion interpolates between the class of LC manifolds introduced by Durhuus-Jonsson (corresponding to the case $t=1$), and the class of all manifolds (case $t=d$).
Benedetti--Ziegler proved that there are at most  $2^{d^2 \, N}$ triangulated $1$-LC  $d$-manifolds with $N$ facets.  
Here we prove that there are at most  $2^{\frac{d^3}{2}N}$ triangulated $2$-LC  $d$-manifolds with $N$ facets. This extends an intuition by Mogami for $d=3$ to all dimensions.
%This extends to all dimensions an intuition by Mogami for $d=3$. 

We also introduce ``$t$-constructible complexes'', interpolating between constructible complexes (the case $t=1$) and all complexes (case $t=d$). We show that all $t$-constructible pseudomanifolds are $t$-LC, and that all $t$-constructible complexes have (homotopical) depth larger than $d-t$. This extends the famous result by Hochster that constructible complexes are (homotopy) Cohen--Macaulay.
\end{abstract}

%%%%%%%%%%%%%%%%%%%%%%%%%%%%%%%%%%%%%%%%%%%%%%%%%%%
%%%%%%%%%%%%%%%%%%%%%%%%%%%%%%%%%%%%%%%%%%%%%%%%%%%
\section{Introduction}
%%%%%%%%%%%%%%%%%%%%%%%%%%%%%%%%%%%%%%%%%%%%%%%%%%%
%%%%%%%%%%%%%%%%%%%%%%%%%%%%%%%%%%%%%%%%%%%%%%%%%%%
Since the Sixties Tullio Regge \cite{PonzanoRegge, Regge, Regge1} and many other physicists and mathematicians, cf.~e.g.~\cite{ADJ}, \cite{AJL}, \cite{Loll}, have worked to develop a discrete version of quantum gravity. In Weingarten's dynamical triangulations (or ``DT'') setup \cite{Weingarten}, smooth manifolds are approximated by equilateral triangulations. This allows to translate all metric aspects, such as curvature and volume, into simpler combinatorial calculations; for example, the partition function for gravity, which is a path integral over all possible metrics, becomes an infinite sum over all triangulations. The downside of this powerful simplification method is a convergence issue. For example, the partition function diverges to infinity, unless one restricts the sum to  triangulations into a certain class, and such class happens to have exponential size. In fact, for any fixed $d \ge 2$,  there are  more than exponentially many triangulated $d$-manifolds with $N$ facets. Here two triangulations are considered equal if they are `combinatorially isomorphic': That is, if up to relabeling the vertices they have the same face poset.

In an important step for this program, Durhuus and Jonsson  \cite{Durhuus} defined ``locally constructible'' (LC) manifolds as those triangulated manifolds obtainable from a tree of $d$-simplices by recursively identifying two boundary facets whose intersection has {\em codimension one}. They proved that LC 3-manifolds are exponentially many \cite{Durhuus}; and also in higher dimensions, LC $d$-manifolds are less than $2^{d^2 \, N}$ \cite{BBLc}. Since all polytope boundaries are LC, this idea lead to a first proof that polytopes with $N$ facets, in fixed dimension, are exponentially many \cite{BBLc}. 

Here we define \emph{$2$-LC manifolds} as those obtainable from a tree of $d$-simplices by recursively identifying two boundary facets that intersect in \emph{codimension one or two}. We prove that this broader class has also exponential size: 

    \begin{thmnonumber}[Theorem \ref{thm:main}]
    \label{theorem:MainTheoremI}
        For fixed $d \geq 3$, the number of combinatorially distinct % simplicial 2-LC $d$-manifolds 
        $2$-LC $d$-manifolds with $N$ facets is smaller than $2^{\frac{d^3}{2}N}$.
    \end{thmnonumber}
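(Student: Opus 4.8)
The plan is to bound the count by an injective encoding: represent each $2$-LC $d$-manifold $M$ with $N$ facets by a binary string of length at most $\tfrac{d^3}{2}N$, and then count strings. This follows the blueprint of Benedetti--Ziegler \cite{BBLc} for the case $t=1$. By definition $M=B/\!\sim$, where $B$ is a tree of $N$ $d$-simplices and $\sim$ is generated by an ordered sequence of gluings $g_1,\dots,g_k$, each identifying two boundary $(d-1)$-faces of the current complex that meet in dimension $d-2$ or $d-3$. A short count shows $B$ has $(d-1)N+2$ boundary $(d-1)$-faces, so (for closed $M$; the bounded case is analogous and has fewer gluings) there are $k=\tfrac12\big((d-1)N+2\big)$ gluings. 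First I would encode $B$ itself: rooted trees of $N$ $d$-simplices with their internal gluings labeled number at most $c(d)^{N}$, costing only $O(d\log d)\cdot N$ bits, which is absorbed into the budget.

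The core is to encode the sequence $g_1,\dots,g_k$ cheaply, and the right bookkeeping is to charge the gluing data not per gluing but per incidence between a boundary facet of $B$ and one of its codimension-$2$ subfaces. A $(d-1)$-simplex has $\binom{d}{2}$ faces of dimension $d-3$, and $B$ has $(d-1)N+2$ boundary facets, so the number of such incidences has leading term $\binom{d}{2}(d-1)N=\tfrac{d(d-1)^2}{2}N<\tfrac{d^3}{2}N$; the residual budget $\big(\tfrac{d^3}{2}-\tfrac{d(d-1)^2}{2}\big)N=\big(d^2-\tfrac d2\big)N$ then comfortably absorbs a constant per-incidence factor, the encoding of $B$, and lower-order terms. (The same accounting with codimension-$1$ subfaces — of which a $(d-1)$-simplex has $d$ — reproduces the $d^{2}N$ bound of \cite{BBLc}, which is the template I would follow.) So it suffices to recover the whole sequence from $O(1)$ bits per codimension-$2$ incidence. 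A codimension-one gluing is handled as in the case $t=1$: the shared free $(d-2)$-face $\rho$ lies in exactly two boundary facets of the current complex, so marking $\rho$ among the current free ridges already determines the pair $\{\sigma,\tau\}$ and the fold, which costs $O(1)$ bits per incidence.

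The main obstacle is the genuinely new case of codimension-two gluings, where $\sigma$ and $\tau$ share only a $(d-3)$-face $\rho$. Here marking $\rho$ does not determine the pair: the link of $\rho$ in the boundary of the current complex is a cycle that may contain $\Theta(N)$ facets, so naively selecting the two glued facets around $\rho$ would cost $\log N$ bits per gluing and destroy the exponential bound. The plan is to prove a structural lemma asserting that, in any $2$-LC construction order, the codimension-two gluings sharing a fixed $(d-3)$-face $\rho$ occur in a \emph{non-crossing} pattern along the cyclic order of facets around $\rho$; they can then be matched implicitly, like balanced parentheses, by recording one bracket symbol at each incidence $(\text{facet},\rho)$ instead of an explicit partner pointer. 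Verifying that this bracket system is consistent across the different $(d-3)$-faces and compatible with the tree traversal — so that the partner is always recoverable from the nesting with no global $\log N$ cost — is where the real work lies, and is the direct analogue, one codimension deeper, of the zipping argument of \cite{BBLc}. Summing the $O(1)$ bits over all codimension-two incidences and adding the tree bits yields a string of length $<\tfrac{d^{3}}{2}N$, hence fewer than $2^{\frac{d^{3}}{2}N}$ manifolds.
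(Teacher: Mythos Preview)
Your plan shares the paper's key structural insight --- that gluings sharing a fixed $(d-3)$-face must form a non-crossing (planar) matching --- and correctly identifies this as the heart of the argument. The paper, however, does \emph{not} proceed by an encoding; it counts directly. It partitions the gluings into ``rounds'': round~1 consists of gluings across $(d-3)$-faces already present in the tree, and round~$i+1$ consists of gluings across $(d-3)$-faces that only became identified after round~$i$. Within each round it bounds the number of matching configurations by a Catalan number (this is where planarity enters, via Jordan--Schoenflies applied to the link of a $(d-3)$-face, which in a closed manifold must eventually become $S^2$), and separately bounds the choice of which $(d-3)$-faces are ``active''. The resulting exponent is $\frac{5d^3}{12}N + O(d^2N)$, comfortably inside $\frac{d^3}{2}N$.

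There are two concrete gaps in your outline. First, the ``rounds'' phenomenon is not addressed: a codimension-two gluing in the current complex may be across a $(d-3)$-face $\rho$ that did \emph{not} exist in the original tree $B$ --- it was born when two distinct $(d-3)$-faces of $B$ were identified by an earlier gluing. Your bracket bits live on incidences in $\partial B$, so it is unclear how to recover such a gluing from them; the ``cycle around $\rho$'' you invoke is then not a single cycle in $\partial B$ but a concatenation of arcs from several links, and your parenthesis system has to be made consistent across that concatenation. The paper spends most of its effort on exactly this cascade (the $L_i$ terms), and you have not indicated how your encoding handles it. Second, your bit budget is too tight as stated. You allot one bit per incidence and claim the residual $(d^2-\tfrac d2)N$ ``absorbs a constant per-incidence factor'', but that residual is $O(d^2N)$ while the number of incidences is $\Theta(d^3N)$, so any per-incidence constant strictly greater than $1$ overruns $\frac{d^3}{2}N$ for large $d$. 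Since the matching around each fixed $\rho$ is \emph{partial} (most facets containing $\rho$ are glued with respect to some other $(d-3)$-face), a single open/close symbol per incidence does not determine it; a Motzkin-type alphabet costs about $\log_2 3$ bits per incidence, already too much. The paper avoids this by charging the Catalan cost per \emph{glued facet} (only $2D\approx(d-1)N$ of those) and paying separately, with a different mechanism, for the choice of $(d-3)$-face --- that separation is what makes the arithmetic close.
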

    
The bound can be extended also to 2-LC \emph{quasi-manifolds}, which are pseudomanifolds with particularly nice face links (Theorem \ref{thm:mainQuasi}), but not to arbitrary 2-LC pseudomanifolds, which are more than exponentially many (Remark \ref{rem:pseudo}).  Theorem \ref{theorem:MainTheoremI} gives a precise mathematical formulation and extends to all dimensions an intuition by Mogami \cite{Mogami}, who worked on 2-LC $3$-spheres. The crucial ingredient for this novel exponential upper bound is the planarity of the links of all $(d-3)$-faces. In general, the link of a $k$-dimensional face in a triangulated $d$-dimensional manifold (without boundary) is a homology-sphere of dimension $(d-k-1)$. However, since homology-spheres that are not spheres exist only in dimension $3$ and higher, when $k=d-3$ all links of  $(d-3)$-faces are indeed homeomorphic to $S^2$.
    
This brings topology into the picture.  Durhuus and Jonsson conjectured in 1995 that all $3$-spheres and $3$-balls are LC \cite{Durhuus}. The conjecture was disproved in 2011 by the first author and Ziegler \cite{BBLc}. The weaker conjecture by Mogami \cite{Mogami} that all $3$-balls are 2-LC was also recently disproved by the first author \cite{BBMogami}. Thus there is little hope that these combinatorial cutoffs may encompass entire topologies. But there are  two other reasons why the LC notion is of mathematical importance,  beyond the enumerative aspect mentioned above:
 \begin{compactenum}[ (a) ]
\item All LC-triangulable manifolds are simply-connected, and conversely, all simply-connected PL manifolds of  dimension $\ne 4$ admit an LC subdivision \cite{BBSmooth};
\item All shellable and all constructible manifolds are LC \cite{BBLc}.
\end{compactenum} 

Both results above are still valid if one replaces ``LC'' with ``2-LC''. This triggers a natural curiosity, namely, whether for the result (b) above, for the 2-LC case, one could say more. Perhaps the `constructible' assumption can be weakened?

To answer this curiosity, we define more generally  ``$t$-LC triangulated manifolds'' as those obtainable from a tree of $d$-simplices by recursively identifying two boundary $(d-1)$-faces whose intersection has dimension at least $d-t-1$. This notion interpolates between LC manifolds (which are the same as $1$-LC) and all manifolds (the same as $d$-LC); the case $t=d-1$ was also previously studied \cite{BBMogami}. In parallel, we introduce  ``$t$-constructible complexes'' as a generalization of constructible complexes, which correspond to the $t=1$ case. Intuitively, $t$-constructible $d$-complexes are defined recursively as those obtained by gluing two $t$-constructible $d$-complexes at a codimension-one subcomplex whose $(d-t)$-skeleton is constructible. 

With these two new properties, we prove the following generalization of the well-known result by Hochster  \cite{Hochster} that all constructible $d$-complexes are Cohen--Macaulay:

     \begin{thmnonumber}[Propositions \ref{prop:depth} and \ref{prop:CLC}]
    \label{theorem:MainTheoremII}
All $t$-constructible $d$-complexes have homotopical depth larger than $d-t$. Moreover, all $t$-constructible pseudomanifolds are $t$-LC.
    \end{thmnonumber}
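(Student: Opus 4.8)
I would prove the two assertions separately, each by an induction mirroring the recursive definition of $t$-constructibility. In both inductions the base case is a single $d$-simplex $\Delta^d$: it is trivially $t$-LC (a one-simplex tree, no identifications), and it is homotopy Cohen--Macaulay, so $\operatorname{hdepth}(\Delta^d)=d>d-t$. For the inductive step I write $C=C_1\cup C_2$, with $C_1,C_2$ $t$-constructible and $D\eqdef C_1\cap C_2$ pure of dimension $d-1$ and with constructible $(d-t)$-skeleton $D^{(d-t)}$.

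\textbf{Homotopical depth.} Recall that $\operatorname{hdepth}(X)\ge k$ means that for every face $\sigma$ of $X$ (including $\sigma=\varnothing$, where $\operatorname{lk}(\varnothing)=X$) the link $\operatorname{lk}_X(\sigma)$ is $(k-\dim\sigma-2)$-connected. The engine is the homotopy gluing lemma
\[
\operatorname{hdepth}(C_1\cup C_2)\ \ge\ \min\bigl\{\operatorname{hdepth}(C_1),\,\operatorname{hdepth}(C_2),\,\operatorname{hdepth}(C_1\cap C_2)+1\bigr\}.
\]
To prove it, fix $\sigma$ and split into cases: if $\sigma$ lies in only one $C_i$ then $\operatorname{lk}_C(\sigma)=\operatorname{lk}_{C_i}(\sigma)$ and the bound is immediate; if $\sigma\in D$ then $\operatorname{lk}_C(\sigma)=\operatorname{lk}_{C_1}(\sigma)\cup\operatorname{lk}_{C_2}(\sigma)$ with intersection $\operatorname{lk}_D(\sigma)$, and I invoke the elementary union principle that if $A,B$ are $m$-connected and $A\cap B$ is $(m-1)$-connected then $A\cup B$ is $m$-connected (van Kampen for $\pi_1$, Mayer--Vietoris plus Hurewicz for the higher groups). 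With $m=k-\dim\sigma-2$ the hypotheses read exactly $\operatorname{hdepth}(C_i)\ge k$ and $\operatorname{hdepth}(D)\ge k-1$, which is where the ``$+1$'' is used. I then reduce the intersection term to the skeleton hypothesis via the observation
\[
\operatorname{hdepth}(D)\ge d-t \iff \operatorname{hdepth}\bigl(D^{(d-t)}\bigr)\ge d-t :
\]
indeed $\operatorname{hdepth}\ge d-t$ only imposes $n$-connectivity of links with $n\le d-t-1$, and $n$-connectivity depends solely on the $(n+1)$-skeleton; since $\operatorname{lk}_{D^{(d-t)}}(\sigma)$ is exactly the relevant skeleton of $\operatorname{lk}_D(\sigma)$, the two conditions agree face by face. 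Because a constructible complex is homotopy Cohen--Macaulay (the classical $t=1$ case), $\operatorname{hdepth}(D^{(d-t)})=d-t$, hence $\operatorname{hdepth}(D)\ge d-t$. Feeding $\operatorname{hdepth}(C_i)\ge d-t+1$ (induction) and $\operatorname{hdepth}(D)+1\ge d-t+1$ into the gluing lemma yields $\operatorname{hdepth}(C)\ge d-t+1$, as desired.

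\textbf{$t$-LC.} For a $t$-constructible pseudomanifold the pieces $C_1,C_2$ are again pseudomanifolds with $D\subseteq\partial C_1\cap\partial C_2$ (the standard inheritance of the pseudomanifold property under a codimension-one split), so by induction each $C_i$ arises from a tree of simplices $T_i$ followed by $t$-LC identifications that leave the facets of $D$ on the boundary untouched. I pick a facet $\sigma_1$ of $D$ and glue $T_1$ to $T_2$ along the corresponding boundary facet of each; gluing two trees of simplices along a single facet again yields a tree of simplices. Performing the internal moves of $C_1$ and of $C_2$ then produces $C_1$ and $C_2$ sharing only $\sigma_1$, and it remains to identify the other facets $\sigma_2,\dots,\sigma_r$ of $D$. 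The only point to verify is that these can be scheduled as valid $t$-LC moves, i.e.\ that at step $i$ the two boundary copies of $\sigma_i$ already meet in dimension $\ge d-t-1$, which is exactly $\dim\bigl(\sigma_i\cap\bigcup_{j<i}\sigma_j\bigr)$. Thus everything reduces to the combinatorial claim that \emph{if $D^{(d-t)}$ is constructible, the facets of $D$ admit an ordering in which each $\sigma_i$ meets $\bigcup_{j<i}\sigma_j$ in dimension $\ge d-t-1$}. Such an ordering exists as soon as the graph $G$ on the facets of $D$, with an edge whenever two facets share a $(d-t-1)$-face, is connected (take any search order). To see $G$ is connected, I use that a constructible complex is strongly connected: any two $(d-t)$-faces of $D$ are joined by a chain of $(d-t)$-faces whose consecutive members share a $(d-t-1)$-face. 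Lifting each $(d-t)$-face of the chain to a facet of $D$ containing it turns the chain into a walk in $G$, since facets sharing a common $(d-t)$-face a fortiori share a $(d-t-1)$-face; connecting the endpoints to the two given facets the same way shows $G$ is connected.

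\textbf{Main obstacle.} I expect the homotopy gluing lemma to be the crux: making the union principle precise requires van Kampen with care about basepoints and the connectivity of $A\cap B$, and one must dispatch the low-connectivity edge cases (empty or disconnected links, and $\sigma$ a facet of $D$) by hand. The skeleton reduction and the strong-connectivity argument are robust once the definitions are fixed; the remaining work in the $t$-LC half is bookkeeping—confirming that the pieces are pseudomanifolds with $D$ on their common boundary and that the internal moves never pre-identify the facets of $D$.
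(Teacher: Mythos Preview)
Your proposal is correct and follows essentially the same route as the paper. For the depth claim the paper phrases the induction as ``the $(d-t+1)$-skeleton is homotopy-Cohen--Macaulay'' and invokes a black-box union lemma (Jonsson~\cite[Lemma~3.32]{Jonsson}, Hibi~\cite{Hibi}) where you unpack that lemma via the link decomposition and the van~Kampen/Mayer--Vietoris ``union principle''; for the $t$-LC claim the paper isolates your scheduling argument as a standalone gluing lemma (Lemma~\ref{lem:glue}), fed by the strong connectivity of the $(d-t)$-skeleton, exactly as you do.
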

    
    The converse of Theorem II is false, even if we restrict ourselves to $3$-manifolds. In fact, in \cite{BeLuKnots} there are two explicit examples (with 13 and 16 vertices, respectively) of two $3$-spheres containing a non-trivial knot that is realized by just three edges in their $1$-skeleton; the knots are the trefoil and the square knot, respectively. These examples have homotopical depth $3$ because they are spheres, and are $1$-LC by computation \cite{BeLuKnots}, but they are not $1$-constructible because of the knot \cite{HZ}.

\newpage    
%%%%%%%%%%%%%%%%%%%%%%%%%%%%%%%%%%%%%%%%%%%%%%%%%%%
%%%%%%%%%%%%%%%%%%%%%%%%%%%%%%%%%%%%%%%%%%%%%%%%%%%
\section{$t$-Constructible versus $t$-LC}
%%%%%%%%%%%%%%%%%%%%%%%%%%%%%%%%%%%%%%%%%%%%%%%%%%%
%%%%%%%%%%%%%%%%%%%%%%%%%%%%%%%%%%%%%%%%%%%%%%%%%%%
%   
As in \cite{BBLc}, to which we refer for all definitions, we shall work with  \emph{simplicial regular CW-complexes}: These are finite regular CW-complexes where for every proper face $F$, the interval $[0,F]$ in the face poset of the complex is Boolean. %For example, a multigraph is a simplicial regular CW complex if and only if it has no loops; parallel edges, instead, are allowed.  
The facets (i.e. the inclusion-maximal faces) of any simplicial regular CW-complex $K$ are therefore simplices; $K$ is \emph{pure} if all facets have the same dimension. Let $\sigma$ be a face of $K$. The \emph{star of $\sigma$ in $K$} is the subcomplex
    $\mathrm{St}(\sigma, K)=\{s\in K \mid \exists \tau \in K \text{ s. t. } \sigma \subset \tau \text{ and } s \subset \tau\}.$
    The \emph{link of $\sigma$ in $K$} is the subcomplex
    $\mathrm{link}(\sigma, K)=\{\tau\in \mathrm{St}(\sigma, K) \mid \tau\cap \sigma = \emptyset \}.$ 
The \emph{boundary of $K$} is the subcomplex
    $ \partial K = \{ s \in K  \mid \exists ! \tau \in K  \text{ s. t. } s \subsetneq \tau \}$.
    The faces of $K$ that do not belong to $\partial K$ are called \emph{interior}.
    If $K$ is a simplicial complex, link and boundary commute, in the sense that $\mathrm{link}(\sigma, \partial K) = \partial \mathrm{link}(\sigma, K)$ for all $\sigma$. 
    Moreover, if the dimension of $\sigma$ is $k$, for a $(d-1)$-face $\tau \in \mathrm{St}(\sigma, K)$ and the corresponding $(d-k-2)$-face $\tau' \in \mathrm{link}(\sigma, K)$, we know $\tau'$ is a boundary face of the $\mathrm{link}(\sigma, K)$ if and only if $\tau$ is a boundary face of $K$.
    
    By a \emph{d-pseudomanifold} we mean a finite regular CW-complex $P$ that is pure $d$-dimensional, simplicial, and such that each $(d -1)$-dimensional cell belongs to at most two $d$-cells. The boundary of the pseudomanifold $P$, denoted $\partial P$, is the smallest subcomplex of $P$ containing all the $(d -1)$-cells of $P$ that belong to exactly one $d$-cell of $P$. 
    According to this convention, adopted in \cite{BBLc}, a pseudomanifold needs not be a simplicial complex; it might be disconnected; and its boundary might not be a pseudomanifold, as shown by a cone over disjoint segments. 
    A \emph{tree of $d$-simplices} is a triangulation of the $d$-dimensional ball whose dual graph is a tree.
    From now on, we use the word ``faces'' as synonymous of ``cells''. We also adopt the convention that the empty set is a face of dimension~$-1$.

  \begin{definition}[$t$-LC]  \label{def:tLC}
  Let $d>1$ be an integer. Let $t \in \{1, \ldots,  d\}$. We call \emph{$t$-LC pseudomanifolds}  the $d$-dimensional pseudomanifolds obtainable from a tree of $d$-simplices by recursively identifying two boundary $(d-1)$-faces whose intersection has dimension at least $d-1-t$.
    \end{definition}
 
   A \emph{$t$-LC gluing } in the boundary of a pseudomanifold is the identification of two boundary facets $\Delta$ and $\Delta'$ whose intersection is at least $(d-t-1)$-dimensional.     The glued facets become interior, so they are not available for further gluings. A  \emph{$t$-local construction} for a pseudomanifold $M$  is a sequence of  $t$-LC gluings that obtains $M$ from some tree of $d$-simplices $T$. From Definition~\ref{def:tLC} it is clear that all $t$-LC pseudomanifolds are also  $(t+1)$-LC.  Three values of $t$ have already been studied in the literature:
\begin{compactitem}
\item    For $t=1$, the ``1-LC'' notion is the same as the ``LC'' notion in \cite{Durhuus} and \cite{BBLc};
\item For $t=d-1$, ``$(d-1)$-LC pseudomanifolds'' are the same as the ``Mogami pseudomanifolds'' introduced in \cite{BBMogami} and named after \cite{Mogami};
\item For $t=d$, all $d$-dimensional strongly-connected pseudomanifolds are $d$-LC. 
  \end{compactitem}
Note that there is a ``big jump'' from $t=d-1$ to $t=d$:  Any non-simply-connected manifold is an example of a $d$-LC pseudomanifold that is not $(d-1)$-LC.

Recall that \emph{constructible complexes} are defined inductively in the following way:
\begin{compactitem}
\item every simplex, and every $0$-complex, is constructible;
\item a $d$-dimensional pure simplicial complex $C$ that is not a simplex is constructible if and only if it can be written as  $C=C_1 \cup C_2$, where $C_1$ and $C_2$ are constructible $d$-complexes, and $C_1 \cap C_2$ is a (pure) constructible $(d-1)$-complex.
\end{compactitem}
Note that when $d=1$, `constructible' is synonymous with `connected'.

\begin{definition}[$t$-constructible] Let $t \le d$ be positive integers. We define \emph{$t$-constructibility} for $d$-dimensional simplicial complexes recursively, as follows:
\begin{compactitem}
\item every simplex is $t$-constructible;
\item a $1$-dimensional complex is $t$-constructible if and only if it is connected;
\item a $d$-dimensional pure simplicial complex $C$ that is not a simplex is $t$-constructible if and only if it can be written as  $C=C_1 \cup C_2$, where $C_1$ and $C_2$ are $t$-constructible \phantom{i} {$d$-complexes}, and $C_1 \cap C_2$ is a pure $(d-1)$-complex whose $(d-t)$-skeleton is constructible.
\end{compactitem}
Note that when $t=1$, ``$1$-constructible'' is synonymous with ``constructible''. 
\end{definition}

\begin{example} Inside the boundary of some shellable $3$-ball $B$, pick two triangles that belong to different tetrahedra in $B$ and that share exactly one vertex $v$. Let $X$ be the  subcomplex of $\partial B$ formed by these two triangles. Glue together two identical copies $B', B''$ of $B$ by identifying the corresponding subcomplexes $X' \equiv X''$. Let $P$ be the resulting $3$-dimensional pseudomanifold. The  link of $v$ in $P$ is topologically an annulus, so it cannot be constructible: in fact, it is easy to see from the recursive definition that the only constructible $2$-manifolds with non-empty boundary are triangulated $2$-disks. Hence, $P$ is not constructible, because constructibility is closed under taking links, cf.~\cite[section 11.2]{Basic}. Yet $P$ is $2$-constructible, because by taking $C_1 = B'$ and $C_2 = B''$ in the definition above, their intersection is a pure $2$-complex $X$ whose $1$-skeleton is a connected graph. %$P$ is also is $2$-homotopy-Cohen-Macaulay, because it is simply connected, and every vertex link in $P$ is connected.
\end{example}

\begin{remark}
It is an open question whether all $k$-skeleta of constructible complexes are themselves constructible. Should the answer to this problem be positive, then it is easy to see, by induction, that if $t<d$ all $t$-constructible $d$-complexes are also $(t+1)$-constructible.  
\end{remark}

\noindent Recall that a (pure) $d$-dimensional complex $C$ is called 
\begin{compactitem} [ -- ]
\item \emph{homotopy-Cohen--Macaulay}  if for any face $F$, for all $i  <  \dim \operatorname{link}(F,C)$, $\pi_i ( \operatorname{link}(F,C))=0$;
\item  \emph{Cohen--Macaulay}, if for any face $F$, for all $i < \dim \operatorname{link}(F,C)$, $H_i ( \operatorname{link}(F,C))=0$.
\end{compactitem}
%\begin{definition} \label{def:tHom} Let $t \le d$ be non-negative integers. A $d$-dimensional complex $C$ is called \emph{$t$-homotopy-CM} (respectively,  \emph{$t$-CM}) if the $(d+1-t)$-skeleton of $C$ is 
%homotopy-Cohen--Macaulay (respectively, Cohen--Macaulay).
%\end{definition}
By Hurewicz' theorem, the first notion is stronger. By choosing $F= \emptyset$, one sees immediately that all the homotopy groups  from the $0$-th to the $(d-1$)-st of any homotopy-Cohen--Macaulay complex must be  trivial. (And analogously, all homology groups  from the $0$-th to the $(d-1$)-st of Cohen--Macaulay complexes are trivial.) Thus any homology-sphere that is not simply-connected, like the 16-vertex triangulation in \cite{BjLu}, is an example of a Cohen--Macaulay complex that is not homotopy-Cohen--Macaulay.
Recall also that the \emph{homotopical depth} of a simplicial complex $C$ is defined as the maximum $k$ such that the $k$-skeleton of $C$ is homotopic-Cohen--Macaulay \cite[Chapter 3.6.1]{Jonsson}. The  \emph{(homological) depth} of $C$ is  defined algebraically, but it can be characterized  as the maximum $k$ such that the $k$-skeleton of $C$ is Cohen--Macaulay \cite[Theorem 4.8]{Smith}.  

Hence, the next Proposition generalizes Hochster's result that all constructible complexes are (homotopy) Cohen--Macaulay \cite{Hochster}:

 \begin{definition}[$t$-homotopy-CM, $t$-CM] Let $t \le d$ be positive integers.
A  (pure) $d$-complex $C$  is called:
\begin{compactitem}[ -- ]
\item  \emph{$t$-homotopy-CM} if it has homotopical depth $> d-t$;
\item \emph{$t$-CM} if it has depth $> d-t$.
\end{compactitem} 
Note that ``$1$-homotopy-CM'' and ``$1$-CM'' are synonymous with ``homotopy-Cohen--Macaulay'' and ``Cohen--Macaulay'', respectively.
 \end{definition}

\begin{proposition} \label{prop:depth}  Let $t \le  d$ be positive integers. %Every $t$-constructible complex has (homotopical) depth greater than $d-t$.
For $d$-dimensional simplicial complexes, we have the following hierarchy: 
\[ \{ \textrm{$t$-constructible} \} \; \subset \; 
\{ \textrm{$t$-homotopy-CM} \} \; \subset \; 
\{ \textrm{$t$-CM} \}. \]
\end{proposition}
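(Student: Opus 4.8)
The statement splits into two inclusions. The second one, that every $t$-homotopy-CM complex is $t$-CM, is immediate from the remarks preceding the statement: by Hurewicz' theorem a homotopy-Cohen--Macaulay complex is Cohen--Macaulay, so for every $k$ the homotopy-CM-ness of the $k$-skeleton $C^{(k)}$ implies its Cohen--Macaulayness. Hence the depth (the largest $k$ with $C^{(k)}$ Cohen--Macaulay) is at least the homotopical depth, and ``homotopical depth $>d-t$'' forces ``depth $>d-t$''. So I would dispose of this in one line and concentrate on the first inclusion.

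For the first inclusion I would begin by translating the conclusion into a statement about connectivity of links. Writing $s=d-t$, the goal ``homotopical depth $\ge s+1$'' means ``$C^{(s+1)}$ is homotopy-CM''. I would use two elementary facts: (i) $\operatorname{link}(F,C^{(k)})=\big(\operatorname{link}(F,C)\big)^{(k-\dim F-1)}$; and (ii) the inclusion $L^{(m)}\hookrightarrow L$ is an isomorphism on $\pi_i$ for $i<m$ and a surjection on $\pi_m$, so $L^{(m)}$ is $(m-1)$-connected if and only if $L$ is. Combining (i) and (ii) (and recalling $\dim\operatorname{link}(F,C)=d-\dim F-1$), one checks that $C^{(s+1)}$ is homotopy-CM if and only if, for every face $F$ of $C$ (including $F=\emptyset$, of dimension $-1$), the link $\operatorname{link}(F,C)$ is $(s-\dim F-1)$-connected. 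This link-connectivity statement is what I would prove; note that the same two facts show that skeleta of homotopy-CM complexes are homotopy-CM, which is what makes ``homotopical depth'' well defined as a threshold.

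Then I would induct on the recursive definition of $t$-constructibility. The base cases (a simplex, or a connected $1$-complex) are direct, since all links of faces of a simplex are contractible. For the inductive step $C=C_1\cup C_2$, with $A:=C_1\cap C_2$ pure of dimension $d-1$ and $A^{(s)}$ constructible, fix $F$ and set $m=s-\dim F-1$. If $F\notin A$, then every facet through $F$ lies in a single $C_i$, so $\operatorname{link}(F,C)=\operatorname{link}(F,C_i)$ and the required $m$-connectivity is just the inductive hypothesis for $C_i$. If $F\in A$, then link commutes with union and intersection, giving $\operatorname{link}(F,C)=\operatorname{link}(F,C_1)\cup\operatorname{link}(F,C_2)$ with $\operatorname{link}(F,C_1)\cap\operatorname{link}(F,C_2)=\operatorname{link}(F,A)$. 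By induction the two pieces $\operatorname{link}(F,C_i)$ are $m$-connected. For the intersection I would invoke Hochster's theorem applied to the constructible complex $A^{(s)}$: its homotopy-CM-ness yields that $\operatorname{link}(F,A^{(s)})=\big(\operatorname{link}(F,A)\big)^{(s-\dim F-1)}$ is $(s-\dim F-2)$-connected, hence by fact (ii) so is $\operatorname{link}(F,A)$, i.e. it is $(m-1)$-connected. The standard gluing lemma --- if $X=X_1\cup X_2$ with $X_1,X_2$ $m$-connected and $X_1\cap X_2$ $(m-1)$-connected (in particular nonempty for $m\ge 0$), then $X$ is $m$-connected, proved via Mayer--Vietoris together with van Kampen and Hurewicz --- then shows $\operatorname{link}(F,C)$ is $m$-connected, completing the step. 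The boundary cases with $m\le -1$ (that is, $\dim F\ge s$) are vacuous or reduce to nonemptiness of the link, which holds by purity.

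The main obstacle is exactly the connectivity bookkeeping in the gluing step: one must match the ``one degree of slack'' built into $t$-constructibility --- the intersection is only required to have a \emph{constructible $(d-t)$-skeleton}, not a constructible $(d-t+1)$-skeleton --- against the precise hypothesis of the gluing lemma, which asks the intersection to be connected only up to degree $m-1=s-\dim F-2$ while the two pieces are connected up to degree $m$. Getting the reduction to link-connectivity right (facts (i) and (ii) and the handling of $F=\emptyset$ and of high-dimensional faces) is the other place demanding care; once these are in place the induction is routine.
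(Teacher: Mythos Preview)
Your argument is correct. Both your proof and the paper's rest on the same two ingredients --- Hochster's theorem for the constructible $(d-t)$-skeleton of the intersection, and a Mayer--Vietoris/van~Kampen gluing principle --- but they are organized differently. The paper stays at the skeleton level: it passes once and for all to the $(d-t+1)$-skeleta of $C_1$, $C_2$, and $C_1\cap C_2$, observes (for $t\ge 2$) that these satisfy $\big(C_1^{(d-t+1)}\big)\cap\big(C_2^{(d-t+1)}\big)=(C_1\cap C_2)^{(d-t+1)}$, checks that this intersection has homotopical depth $\ge d-t$, and then invokes a ready-made gluing lemma for the homotopy-Cohen--Macaulay property (Jonsson, Lemma~3.32) to conclude. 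You instead unpack ``homotopical depth $>d-t$'' into a face-by-face connectivity statement about links and run the gluing argument inside each $\operatorname{link}(F,C)$, essentially reproving Jonsson's lemma along the way. Your route is more self-contained and makes the ``one degree of slack'' visible at every face; the paper's route is shorter and avoids the case split on whether $F\in A$. Both are valid, and the equivalence between them is exactly your translation via facts~(i) and~(ii).
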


\begin{proof} The claim for $t=1$ (and arbitrary $d$) is a well-known result by Hochster \cite{Hochster}, so  we shall focus on the case $2 \le t \le d$. 
Also, the second inclusion is straightforward, so we shall focus on the first one.
Let $C$ be a   $t$-constructible $d$-complex with $N$ facets. If $C$ is a simplex, then it has homotopical depth $d$, and $d \ge d-t+1$. If $C$ is not a simplex, then $C=C_1 \cup C_2$, where $C_1$ and $C_2$ are $t$-constructible $d$-complexes, and $C_1 \cap C_2$ is a pure $(d-1)$-dimensional complex whose $(d-t)$-skeleton is constructible. By the inductive assumption with respect to $N$, the $(d-t+1)$-skeleta of $C_1$ and of $C_2$ are homotopy-Cohen--Macaulay. Since we are in the case $t \ge 2$, it is easy to see that their intersection is
\[ U \ \eqdef \ (d-t+1)\operatorname{-skel} \; (C_1 \cap C_2).\]
%In fact, if $F$ belongs to the $(d-t+1)$-skeleta of $C_i$ (for $i=1,2$),  then $F$ belongs to $C_1 \cap C_2$, which is pure: so there is a $(d-1)$-face of $C_1 \cap C_2$ containing $F$. Since $\dim F \le d-t+1 \le d-1$, there is also  a $(d-t+1)$-face of $C_1 \cap C_2$ containing $F$. So $F$ is in $U$. Conversely, if $F$ is in $U$, clearly $F$ belongs to the $(d-t+1)$-skeleta of $C_i$ for $i=1,2$. So the claim is proven.
But by assumption, the $(d-t)$-skeleton of $C_1 \cap C_2$ is constructible, hence homotopy-Cohen--Macaulay, and it coincides with the $(d-t)$-skeleton of $U$. Hence, $U$ has dimension $d-t+1$ and homotopical depth $\ge d-t$. It follows 
(see  e.g.~Jonsson \cite[Lemma 3.32]{Jonsson} for the homotopical statement and  Hibi \cite[p.~98]{Hibi} for the homological one) 
that the union of the $(d-t+1)$-skeleta of $C_1$ and $C_2$ is homotopy-Cohen--Macaulay. In other words, both the homotopical depth  and the homological depth of $C_1 \cup C_2$ are at least $d-t+1$.
\end{proof}

\begin{lemma} \label{lem:skeleton} Let $t \le d$ be positive integers.\\
\emph{ (i)} If a $d$-complex is $t$-constructible, for all $0 \le k \le d$ its $k$-skeleton is strongly connected. \\
\emph{(ii)}  If a $d$-pseudomanifold is $t$-LC, for all $0 \le k \le d$ its $k$-skeleton is strongly connected.
\end{lemma}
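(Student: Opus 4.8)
The plan is to handle both parts by the same two-step mechanism. First I establish strong connectivity of every skeleton for the basic building blocks---a single simplex in part (i), and a tree of $d$-simplices in part (ii). Then I show that the recursive operation used to assemble larger objects---a union along a pure $(d-1)$-complex in part (i), and a single $t$-LC gluing in part (ii)---preserves strong connectivity of every skeleton. Recall that for a pure $k$-complex, being strongly connected means that any two $k$-faces can be joined by a sequence of $k$-faces in which consecutive members share a $(k-1)$-face; the case $k=0$ is automatic, since all vertices contain the empty $(-1)$-face.

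For part (i) I induct on the number $N$ of facets of the $t$-constructible complex $C$. If $C$ is a simplex the claim is clear, and if $C$ is $1$-dimensional it is connected by definition, so its $1$-skeleton is strongly connected. Otherwise write $C = C_1 \cup C_2$ as in the definition of $t$-constructibility, where $C_1$ and $C_2$ are $t$-constructible $d$-complexes with strictly fewer facets (both proper in $C$, since $C_1 \cap C_2$ is only $(d-1)$-dimensional), and where $C_1 \cap C_2$ is a nonempty pure $(d-1)$-complex. By the inductive hypothesis the $k$-skeleta of $C_1$ and of $C_2$ are strongly connected for every $k$. For $0 \le k \le d-1$, the $k$-skeleton of $C$ is the union of the $k$-skeleta of $C_1$ and $C_2$, and these share the $k$-skeleton of $C_1 \cap C_2$, which is a nonempty pure $k$-complex and hence contains at least one common $k$-face; a union of two strongly connected pure $k$-complexes that share a common $k$-face is again strongly connected, so we are done in this range. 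For $k=d$, the dual graphs of $C_1$ and $C_2$ are each connected, and any $(d-1)$-face $\rho$ of $C_1 \cap C_2$ lies in a facet of $C_1$ and in a facet of $C_2$ by purity, yielding an edge of the dual graph of $C$ that bridges the two. Observe that this argument uses only that $C_1 \cap C_2$ is a nonempty pure $(d-1)$-complex, never the constructibility of its $(d-t)$-skeleton.

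For part (ii) I first note that a tree of $d$-simplices has strongly connected skeleta: this follows from exactly the union argument above, inducting on the number of simplices and peeling off a leaf $d$-simplex glued to the rest along a single $(d-1)$-face. It then remains to check that one $t$-LC gluing preserves strong connectivity of every skeleton. Let $M'$ be obtained from $M$ by identifying two boundary facets, with quotient map $\pi\colon M \to M'$. Since a $t$-LC gluing identifies two $(d-1)$-simplices together with the induced identification of their subfaces, $\pi$ is dimension-preserving and surjective on $k$-faces for each $k$. Given two $k$-faces of $M'$, lift them to $k$-faces of $M$, join the lifts by a path in the strongly connected $k$-skeleton of $M$, and apply $\pi$: whenever consecutive faces $F_i, F_{i+1}$ of the path share a $(k-1)$-face $\rho$, the images $\pi(F_i)$ and $\pi(F_{i+1})$ both contain the $(k-1)$-face $\pi(\rho)$ and are therefore equal or adjacent. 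Hence the image of the path is a walk in the $k$-skeleton of $M'$ joining the two chosen faces. Iterating over the gluings of the $t$-local construction yields the claim; as in part (i), the dimension hypothesis $\dim(\Delta \cap \Delta') \ge d-t-1$ is never used, so the statement in fact holds uniformly in $t$.

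The whole argument is essentially routine, and the only place demanding mild care is the quotient step in part (ii), where one must confirm that $\pi$ sends $k$-faces to $k$-faces, preserves the relation of sharing a $(k-1)$-face (possibly collapsing an edge of the dual graph to a point when two faces get identified), and is surjective on each skeleton so that the lifting of faces is legitimate. Because all of these are immediate from the fact that a $t$-LC gluing is simply an identification of two $(d-1)$-simplices, I do not foresee a genuine obstacle.
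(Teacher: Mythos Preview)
Your proof is correct and follows essentially the same approach as the paper's: induction on the number of facets for part (i), with the union $C=C_1\cup C_2$ bridged via faces of $C_1\cap C_2$, and for part (ii) the observation that trees of simplices have strongly connected skeleta and that a boundary identification cannot destroy this. Your write-up is in fact more detailed than the paper's (which dispatches part (ii) in a single sentence), and your remark that the parameter $t$ plays no role is a correct and worthwhile observation.
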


\begin{proof}
(i): By induction on the number of $d$-faces. If $C$ is a simplex, the $k$-skeleton of $C$ is even shellable \cite{BjWa}. If not, then $C=C_1 \cup C_2$, where $C_1$ and $C_2$ are $t$-constructible $d$-complexes, and $C_1 \cap C_2$ is a pure $(d-1)$-dimensional complex whose $(d-t)$-skeleton is constructible. By inductive assumption, the $k$-skeleta of $C_1$ and of $C_2$ both have connected dual graphs. Moreover, any $(k-1)$-face $\sigma$ of $C_1 \cap C_2$ is contained in some $d$-face of $C_i$, and in particular in some $k$-faces of $C_i$, for $i=1,2$. Therefore there is an edge ``across $\sigma$'' in the dual graph of $C$ that connects the dual graphs of the $k$-skeleta of $C_1$ and of $C_2$.\\
(ii): The $k$-skeleton of a tree of simplices is strongly-connected. Any subsequent boundary gluing does not destroy this property.
\end{proof}

\begin{lemma} \label{lem:glue}  Let $t \le d$ be positive integers. Let $C$ be a $d$-dimensional pseudomanifold. If $C$ can be split in the form $C=C_1 \cup C_2$, where $C_1$ and $C_2$ are $t$-LC $d$-pseudomanifolds and $C_1 \cap C_2$ % \; \subset (\partial A \cap \partial B)$ 
is a pure $(d-1)$-complex with strongly-connected $(d-t)$-skeleton, then $C$ is $t$-LC. 
\end{lemma}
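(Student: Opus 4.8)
The plan is to splice the two given $t$-local constructions of $C_1$ and $C_2$ into a single one for $C$, and then to append a sequence of gluings that fuse $C_1$ and $C_2$ along the facets of $C_1\cap C_2$, ordered so carefully that every one of them is a legal $t$-LC gluing.

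I would begin by fixing $t$-local constructions $T_1 \to C_1$ and $T_2 \to C_2$, with $T_1, T_2$ trees of $d$-simplices. Because $C$ is a pseudomanifold, every facet $\sigma$ of $C_1\cap C_2$ lies in exactly one $d$-face of $C_1$ and one of $C_2$; hence $\sigma$ is a boundary facet of both $C_1$ and $C_2$, and its preimages in $T_1$ and $T_2$ are boundary facets that are never identified during the respective constructions. Choosing one facet $\sigma_1$ of $C_1\cap C_2$, I would glue $T_1$ to $T_2$ by identifying the preimage of $\sigma_1$ in $T_1$ with its counterpart in $T_2$. Gluing two trees of $d$-simplices along a single boundary facet again yields a tree of $d$-simplices $T$ (a ball with tree dual graph), and this identification is part of specifying $T$, not a construction step. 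Running the two original constructions on the two halves of $T$ then produces exactly $C_1$ and $C_2$ identified along the single facet $\sigma_1$.

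The heart of the argument is a combinatorial claim: strong connectivity of the $(d-t)$-skeleton of $C_1\cap C_2$ implies that the graph on the $(d-1)$-facets of $C_1\cap C_2$, in which two facets are adjacent whenever they share a $(d-t-1)$-face, is connected. To see this, take facets $F, F'$ and $(d-t)$-faces $\rho\subseteq F$, $\rho'\subseteq F'$; strong connectivity supplies a path of $(d-t)$-faces from $\rho$ to $\rho'$ in which consecutive faces share a $(d-t-1)$-face, and lifting each face of the path to a $(d-1)$-facet containing it turns it into a path of facets in which consecutive facets share a $(d-t-1)$-face. Taking a spanning tree of this graph rooted at $\sigma_1$ and enumerating the remaining facets $\sigma_2, \ldots, \sigma_m$ so that each $\sigma_i$ is adjacent to an earlier one, I secure, for every $i \ge 2$, a $(d-t-1)$-face $\tau_i \subseteq \sigma_i$ that is also contained in some already-listed $\sigma_j$.

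Finally I would perform the gluings $\sigma_2, \ldots, \sigma_m$ in this order. When $\sigma_i$ is reached, its partner $\sigma_j$ is already interior, so the two copies of $\tau_i$ coming from $C_1$ and $C_2$ have already been identified; thus the two boundary copies of $\sigma_i$ meet along $\tau_i$, and their intersection has dimension at least $d-t-1 = d-1-t$. Each step is therefore a legal $t$-LC gluing, and after the last one we have built $C_1\cup C_2 = C$ from the tree $T$, so $C$ is $t$-LC. I expect the ordering step to be the main obstacle: fusing the facets of $C_1\cap C_2$ naively fails at the very first identification, whose two facets lie in disjoint pieces and meet in the empty set; pre-merging the trees along $\sigma_1$ disposes of that first gluing, while the strong-connectivity hypothesis is precisely what guarantees that each subsequent facet arrives already sharing a $(d-t-1)$-face with the part built so far.
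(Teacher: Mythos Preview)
Your proof is correct and follows essentially the same approach as the paper's: join the two trees along one shared facet of $C_1\cap C_2$, replay the original $t$-LC constructions on the two halves, and then absorb the remaining facets of $C_1\cap C_2$ one at a time in an order guaranteed by the strong connectivity of the $(d-t)$-skeleton. The paper simply asserts the existence of such an ordering, whereas you spell out the lifting-and-spanning-tree argument; your extra justification that the facets of $C_1\cap C_2$ are boundary in both $C_i$ (using that $C$ is a pseudomanifold) is also a detail the paper leaves implicit.
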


\begin{proof}
The proof is a direct generalization of that of \cite[Lemma 2.23]{BBLc}. Fix a $t$-local construction for $C_1$ and $C_2$, and call $T_i$ the tree along which $C_i$ is constructed. Pick a $(d-1)$-face $\sigma$ in $C_1 \cap C_2$, which thus specifies a $(d-1)$-face in the boundary of $C_1$ and of $C_2$. Let $C'$ be the pseudomanifold obtained by identifying the two copies of $\sigma$.  Clearly, $C'$ has a $t$-local construction along the tree obtained by joining $T_1$ and $T_2$ by an edge across $\sigma$: Just redo the same $t$-LC gluings of the $C_i$'s. Now if $C_1 \cap C_2$ consists of only one simplex, then $C = C'$ and we are done. Otherwise, by the assumption, we can label the facets of $C_1 \cap C_2$ by $\sigma_0, \ldots, \sigma_m$, so that $\sigma_0 = \sigma$ and for each $k \ge 1$ the facet $\sigma_k$ intersects the union $\sigma_0 \cup \ldots \cup \sigma_{k-1}$ in a subcomplex of $\partial \sigma_k$ of dimension $\ge d-t-1$. Now for each $i$, identify the two copies of the facet $\sigma_i$ inside $C'$. All these gluings are $t$-LC, and eventually yield $C$.
\end{proof}

\begin{corollary} \label{cor:depth}  Let $C$ be a $d$-pseudomanifold. If $C=C_1 \cup C_2$, where $C_1$ and $C_2$ are $t$-LC $d$-pseudomanifolds and 
$C_1 \cap C_2$ 
is a pure $(d-1)$-complex of depth $\ge d-t$,  then $C$ is $t$-LC. 
\end{corollary}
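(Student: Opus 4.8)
The plan is to deduce this corollary directly from Lemma~\ref{lem:glue}, whose hypotheses are identical to those of the corollary except for one point: Lemma~\ref{lem:glue} requires the $(d-t)$-skeleton of $C_1\cap C_2$ to be \emph{strongly connected}, whereas here we are given that $C_1 \cap C_2$ has \emph{depth} $\ge d-t$. So the whole content of the corollary is to verify that the depth hypothesis implies the strong-connectivity hypothesis; once that is done, Lemma~\ref{lem:glue} applies verbatim and gives that $C = C_1 \cup C_2$ is $t$-LC.

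To bridge the two formulations, I would first unwind the characterization of depth recalled above, namely that the depth of a complex is the largest $k$ for which its $k$-skeleton is Cohen--Macaulay. Thus the assumption that $C_1 \cap C_2$ has depth $\ge d-t$ says precisely that the complex $(d-t)\operatorname{-skel}(C_1\cap C_2)$ is Cohen--Macaulay. Next I would invoke the classical fact that every (pure) Cohen--Macaulay complex is strongly connected, i.e.\ connected in codimension one (see e.g.~\cite{Basic}). Applying this to the $(d-t)$-dimensional complex $(d-t)\operatorname{-skel}(C_1\cap C_2)$, which is pure since $C_1\cap C_2$ is a pure $(d-1)$-complex and $d-t \le d-1$, I conclude that its dual graph is connected. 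This is exactly the hypothesis of Lemma~\ref{lem:glue}, which then finishes the proof.

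The only genuine step here is the implication ``Cohen--Macaulay $\Rightarrow$ strongly connected,'' which I would therefore expect to be the main obstacle — though it is a standard fact rather than a deep one. If I wanted to avoid citing it, I would prove it by induction on the dimension $m = d-t$ of the skeleton: the base case $m=1$ is just that a Cohen--Macaulay graph is connected, since the vanishing of $H_0$ for $F=\emptyset$ forces connectedness; in the inductive step one uses that every vertex link of a Cohen--Macaulay complex is again Cohen--Macaulay of dimension $m-1$, hence strongly connected by induction, while $H_0=0$ gives global connectedness, and a routine patching argument across shared vertices upgrades connectedness to strong connectedness. The one degenerate case worth flagging is $t=d$, where $d-t=0$: there a $t$-LC gluing only requires the two facets to meet in dimension $\ge d-1-t=-1$, which is automatic, so Lemma~\ref{lem:glue} imposes no real connectivity condition and the corollary holds trivially.
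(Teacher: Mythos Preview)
Your proposal is correct and follows exactly the same route as the paper: use the depth characterization to get that the $(d-t)$-skeleton of $C_1\cap C_2$ is Cohen--Macaulay, hence strongly connected, and then apply Lemma~\ref{lem:glue}. The paper's proof is a single sentence to this effect; your additional remarks (the sketch of ``CM $\Rightarrow$ strongly connected'' and the $t=d$ edge case) are extra detail but not a different argument.
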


\begin{proof}
The $(d-t)$-skeleton of $C_1 \cap C_2$ is Cohen--Macaulay, hence strongly-connected.
\end{proof}

\begin{proposition} \label{prop:CLC} Let $t \le d$ be positive integers. For $d$-dimensional pseudomanifolds, we have the following hierarchy: 
\[ \{ \textrm{$t$-constructible} \} \; \subset \; 
\{ \textrm{$t$-LC} \}  \;  \subset \;
\{ \textrm{ all } \}, 
\]
and for $d \ge t+2$ all inclusions are strict. 
\end{proposition}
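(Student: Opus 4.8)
The plan is to prove the two inclusions first, and then handle strictness. The second inclusion is immediate: every $t$-LC pseudomanifold is, by definition, a pseudomanifold. For the first inclusion I would induct on the number $N$ of facets of a $t$-constructible $d$-pseudomanifold $C$, using Lemma~\ref{lem:glue} as the engine. If $C$ is a simplex it is a one-vertex tree of $d$-simplices, hence $t$-LC. Otherwise $C = C_1 \cup C_2$, with $C_1, C_2$ both $t$-constructible and having fewer facets, and with $C_1 \cap C_2$ a pure $(d-1)$-complex whose $(d-t)$-skeleton is constructible. By the inductive hypothesis $C_1$ and $C_2$ are $t$-LC. The $(d-t)$-skeleton of $C_1 \cap C_2$ is a constructible complex, so by the $t=1$ case of Lemma~\ref{lem:skeleton}(i) it is strongly connected; thus the hypotheses of Lemma~\ref{lem:glue} are met and $C$ is $t$-LC. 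This yields the chain of inclusions.

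For strictness I would treat the two inclusions separately, since they are obstructed by different phenomena --- the fundamental group for the outer one, knotting for the inner one. \emph{Outer inclusion.} Because $d \ge t+2$ forces $t \le d-1$, and the classes grow with $t$ (every $t$-LC pseudomanifold is $(d-1)$-LC), it is enough to produce a $d$-pseudomanifold that is not $(d-1)$-LC. Here I would invoke the fact, recorded in the Introduction and due to \cite{BBMogami}, that a non-simply-connected closed manifold is never $(d-1)$-LC: a triangulation of $S^1 \times S^{d-1}$ (or of $\mathbb{RP}^d$) is such a $d$-manifold, hence a $d$-pseudomanifold with $\pi_1 \ne 0$, so it is not $t$-LC, witnessing $\{t\text{-LC}\} \subsetneq \{\text{all}\}$.

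\emph{Inner inclusion.} The base case is $(t,d)=(1,3)$: the knotted $3$-spheres of \cite{BeLuKnots} are $1$-LC by the computation there but not $1$-constructible, since a nontrivial knot spanned by three edges obstructs constructibility \cite{HZ}; this is exactly the example discussed after Theorem~II. To reach an arbitrary pair with $d \ge t+2$ I would lift such an example. Raising $d$ while fixing $t$ I would do by suspension: the $(d-3)$-fold suspension of a knotted $3$-sphere is a $d$-sphere whose codimension-two subsphere is still spanned by few faces and still carries the original knot group (the complement of a suspension deformation-retracts onto the complement of the knot, so $\pi_1$ is unchanged), and suspension preserves being $1$-LC. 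The latter one checks by writing the suspension as two cones glued along a sphere and applying Lemma~\ref{lem:glue}, using that cones over $1$-LC complexes are $1$-LC (coning commutes with the tree structure and with codimension-one gluings). Since $1$-LC implies $t$-LC, the lifted sphere is $t$-LC.

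The genuinely delicate point --- and the step I expect to be the main obstacle --- is to show these examples are \emph{not} $t$-constructible for $t \ge 2$. This does not follow from non-$1$-constructibility, because $t$-constructibility is a \emph{weaker} property: the gluing complex in a $t$-constructible split is only required to have a constructible $(d-t)$-skeleton, so many more decompositions are admissible and all must be excluded. The plan is to take as knot a connected sum of $c$ trefoils with $c=c(t)$ large, realized on few edges, and to upgrade the quantitative knot obstruction of \cite{HZ} into an obstruction to $t$-constructibility: each splitting step can only reduce the complexity of the knot group by an amount controlled by the constructible $(d-t)$-skeleton of the gluing complex, so for $c$ large no $t$-constructible decomposition can exist. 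Making this quantitative bound precise --- together with verifying, via Lemma~\ref{lem:glue}, that the suspended complexes really remain $t$-LC --- is where the real work lies; the remaining manipulations with joins and suspensions are routine bookkeeping.
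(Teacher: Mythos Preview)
Your argument for the two inclusions and for the strictness of the outer one is essentially the paper's: induction on the number of facets via Lemma~\ref{lem:glue} for $\{t\text{-constructible}\}\subset\{t\text{-LC}\}$, and a non-simply-connected manifold to witness $\{t\text{-LC}\}\subsetneq\{\text{all}\}$ when $t\le d-1$.

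The genuine gap is in the strictness of the inner inclusion. You correctly identify the difficulty: knowing that a complex is not $1$-constructible says nothing about failure of $t$-constructibility for $t\ge 2$, because the latter allows \emph{more} splittings. Your proposed remedy --- take a connected sum of many trefoils and develop a quantitative version of the Hachimori--Ziegler obstruction that survives relaxing the intersection hypothesis to ``constructible $(d-t)$-skeleton'' --- is a research program, not a proof. You say as much. Nothing in \cite{HZ} or the surrounding literature gives such a bound, and it is unclear how the bridge number or knot group complexity would interact with a decomposition whose gluing complex is only required to have a constructible $(d-t)$-skeleton; that skeleton can be very wild above dimension $d-t$.

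The paper sidesteps this entirely by using \emph{links} rather than knots. Proposition~\ref{prop:Linkt}(i) shows that $t$-constructibility is inherited by vertex links, while Proposition~\ref{prop:Linkt}(ii) builds, for every $t$, an explicit $(t+2)$-dimensional pseudomanifold $M$ that is $t$-LC but has a vertex link homotopy equivalent to $S^1$; such a link is not $t$-constructible (it fails the depth bound of Proposition~\ref{prop:depth}), hence $M$ itself is not $t$-constructible. Iterated coning via Proposition~\ref{prop:cone} then moves the example from dimension $t+2$ up to any $d\ge t+2$. This is short and uses only properties already established in the paper; no new obstruction theory is needed. If you want to salvage your approach, you would need an honest theorem of the form ``a $d$-sphere containing a $3$-edge knot with sufficiently complicated group is not $t$-constructible,'' and that is not available.
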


\begin{proof} 
By Lemma \ref{lem:skeleton}, the $(d-t)$-skeleton of any $t$-constructible $(d-1)$-complex is strongly-connected. So Lemma \ref{lem:glue} (or Corollary  \ref{cor:depth}, together with Proposition \ref{prop:depth}) immediately implies by induction that all $t$-constructible complexes are $t$-LC.  To show the strictness of this inclusion,  take $(d-t-2)$ consecutive cones over the example constructed in Proposition \ref{prop:Linkt}, part (ii), and apply Proposition \ref{prop:Linkt}, part (i), and Proposition \ref{prop:cone} below. Finally, the second inclusion is obvious, and its strictness (already for $d \ge t+1$) will be shown in Corollary \ref{cor:hierarchy} below.
\end{proof}

\begin{proposition} \label{prop:Linkt} Let $t$ be any positive integer. \\
\emph{  (i) } For any $d > t$, the link of any vertex in a $t$-constructible $d$-complex, is $t$-constructible. \\
\emph{ (ii) } There exists a  $(t+2)$-dimensional pseudomanifold $M$ that is $t$-LC, but has a vertex link that is neither $t$-LC nor $t$-constructible.
\end{proposition}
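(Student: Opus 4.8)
For part (i) I would argue by induction on the number $N$ of facets of the $t$-constructible $d$-complex $C$, with $d>t$ fixed so that ``$t$-constructible'' is meaningful for the $(d-1)$-dimensional links. If $C$ is a simplex, every vertex link is again a simplex, hence $t$-constructible. Otherwise write $C=C_1\cup C_2$ as in the definition and fix a vertex $v$. If $v$ lies in only one of the $C_i$, then $\operatorname{link}(v,C)=\operatorname{link}(v,C_i)$ is $t$-constructible by induction. If $v\in C_1\cap C_2$, I would use the elementary identities $\operatorname{link}(v,C_1\cup C_2)=\operatorname{link}(v,C_1)\cup\operatorname{link}(v,C_2)$ and $\operatorname{link}(v,C_1)\cap\operatorname{link}(v,C_2)=\operatorname{link}(v,C_1\cap C_2)$. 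The two pieces are $t$-constructible by induction, and their intersection $\operatorname{link}(v,C_1\cap C_2)$ is a pure $(d-2)$-complex, i.e.\ codimension one inside the $(d-1)$-complex $\operatorname{link}(v,C)$. It remains to see that its $((d-1)-t)$-skeleton is constructible. The key computation is that links commute with skeleta in the shape $\operatorname{skel}_j(\operatorname{link}(v,D))=\operatorname{link}(v,\operatorname{skel}_{j+1}(D))$; applying it with $j=d-1-t$ and $D=C_1\cap C_2$ identifies the skeleton in question with $\operatorname{link}(v,\operatorname{skel}_{d-t}(D))$, the link of a vertex in a constructible complex, which is constructible because constructibility is preserved under links (cf.\ the Example above and \cite[Section 11.2]{Basic}). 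This closes the induction.

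For part (ii) the plan is to produce, in dimension $d=t+2$, a $t$-LC pseudomanifold $M$ whose vertex link is a non-simply-connected manifold, since such a link cannot be $t$-LC. Let $W$ be a shellable $(t+1)$-ball containing two \emph{disjoint} $t$-simplices $Y_1,Y_2$ in its boundary (e.g.\ a sufficiently long stacked ball), and let $v$ be a new apex. Put $B:=v*W$, a shellable $(t+2)$-ball in which $v$ is a boundary vertex with $\operatorname{link}(v,B)=W$. Inside $\partial B$ I would take the subcomplex $X:=(v*Y_1)\cup(v*Y_2)\cup\Pi$, where $\Pi$ is a dual path of facets of $W$ joining a facet through $Y_1$ to a facet through $Y_2$. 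By construction $X$ is a pure $(t+1)$-complex whose $2$-skeleton is strongly connected (the path $\Pi$ links the two cones through the shared $t$-faces $Y_1,Y_2$), while $\operatorname{link}(v,X)=Y_1\sqcup Y_2$ because $v$ lies only in $v*Y_1$ and $v*Y_2$. Finally let $M:=B'\cup B''$ be obtained by gluing two copies of $B$ along $X'\equiv X''$.

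Since $B$ is shellable it is constructible, hence LC, hence $t$-LC; and $X=B'\cap B''$ has strongly connected $(d-t)$-skeleton (here $d-t=2$), so Lemma~\ref{lem:glue} shows that $M$ is $t$-LC. On the other hand $\operatorname{link}(v,M)=\operatorname{link}(v,B')\cup\operatorname{link}(v,B'')=W'\cup W''$ glued along $\operatorname{link}(v,X)=Y_1\sqcup Y_2$; that is, $\operatorname{link}(v,M)$ is the union of two $(t+1)$-balls along two disjoint $t$-balls in their boundary spheres. A van~Kampen computation (two contractible pieces glued along two contractible pieces) gives $\operatorname{link}(v,M)\simeq S^1$, so $\operatorname{link}(v,M)$ is a non-simply-connected $(t+1)$-manifold. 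It is not $t$-constructible: a $t$-constructible complex is $t$-homotopy-CM by Proposition~\ref{prop:depth}, so its homotopical depth would exceed $(t+1)-t=1$, forcing $\pi_1=0$. And it is not $t$-LC: a $t$-LC complex of dimension $t+1$ is a Mogami manifold in the sense of \cite{BBMogami}, and Mogami manifolds are simply connected (this is the ``big jump'' remark after Definition~\ref{def:tLC}, that a non-simply-connected manifold is $(t+1)$-LC but not $t$-LC).

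The step I expect to be most delicate is guaranteeing \emph{simultaneously} that $M$ is $t$-LC and that its vertex link is non-simply-connected, since these pull in opposite directions: $t$-LC-ness (via Lemma~\ref{lem:glue}) demands that the gluing locus $X$ be globally well connected in codimension two, whereas a non-trivial $\pi_1$ in the link demands that $X$ be \emph{pinched} at $v$, i.e.\ that $\operatorname{link}(v,X)$ be disconnected. Realizing $X$ as the two cones $v*Y_i$ joined by a path $\Pi$ avoiding $v$ is exactly what reconciles the two requirements, and writing $B$ as the cone $v*W$ lets me place $X$ in $\partial B$ with no ad hoc sphere-embedding argument. The only external input is the simple-connectivity of Mogami manifolds, for which I would cite \cite{BBMogami}.
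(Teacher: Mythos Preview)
Your argument for part~(i) is correct and matches the paper's proof: both induct on the number of facets, split $\operatorname{link}(v,C)$ as $L_1\cup L_2$ with $L_i=\operatorname{link}(v,C_i)$, and identify $\operatorname{skel}_{d-1-t}(L_1\cap L_2)$ with $\operatorname{link}\bigl(v,\operatorname{skel}_{d-t}(C_1\cap C_2)\bigr)$, which is constructible because links in constructible complexes are constructible.

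Your construction for part~(ii), however, does not produce a non-simply-connected link. The flaw is in the claim that $\operatorname{link}(v,B')\cap\operatorname{link}(v,B'')=\operatorname{link}(v,X)=Y_1\sqcup Y_2$. Because you took $B=v\ast W$, the link $\operatorname{link}(v,B)$ is \emph{all of $W$}, and in particular contains $\Pi$. When you glue $B'\cup_X B''$ you identify $\Pi'\equiv\Pi''$, and since $\Pi'\subset W'$ and $\Pi''\subset W''$, the intersection $W'\cap W''$ inside $M$ equals $W\cap X=Y_1\cup Y_2\cup\Pi=\Pi$, not $Y_1\sqcup Y_2$. (Concretely, for any $\tau\in\Pi\setminus(Y_1\cup Y_2)$ the cells $v'\ast\tau'$ and $v''\ast\tau''$ are distinct in $M$ but have the same vertex set, so the identity $\operatorname{link}(v,A)\cap\operatorname{link}(v,B)=\operatorname{link}(v,A\cap B)$ fails.) Thus $\operatorname{link}(v,M)=W'\cup_{\Pi}W''$ is a union of two contractible complexes along a single contractible complex, hence simply connected, and your van~Kampen computation collapses. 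In the extreme case where $W$ is stacked and $\Pi=W$, your $M$ is literally a single copy of $B$.

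The paper avoids exactly this trap. There the gluing locus is $P=H_1\cup\dots\cup H_m$ with the end vertices $1$ and $m{+}t{+}1$ identified; the pinched vertex $1$ lies only in $H_1$ and $H_m$, so the connecting segment $H_2,\dots,H_{m-1}$ avoids $1$ entirely. One then embeds $P$ in the boundary of a shellable $(t{+}2)$-ball $B$ in which $\operatorname{link}(1,B)$ is a ball meeting $P$ only along $\operatorname{link}(1,P)$, so that doubling along $P$ really does glue two balls along two \emph{disjoint} $t$-simplices. Your very choice $B=v\ast W$ forces the connecting path into $\operatorname{link}(v,B)$; any repair must route that path through boundary facets of $B$ \emph{not} containing $v$, at which point $B$ can no longer be the cone on $v$ and you are essentially back to the paper's construction.
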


\begin{proof} (i): 
%The claim is known for $t=1$ \cite{Basic}, so we may assume $t \ge 2$ and $d \ge 3$. 
We proceed by induction on the number of $d$-faces. Let $C$ be a $t$-constructible $d$-complex and let $v$ be a vertex of $C$. If $C$ is a simplex, the claim is clear. If not, then $C = C_1 \cup C_2$, where $C_1$ and $C_2$ are $t$-constructible $d$-complexes, $C_1 \cap C_2$ 
is a pure $(d-1)$-complex, and  $S : = (d-t)\operatorname{-skel}( C_1 \cap C_2)$ is constructible. If $v$ is not in $C_1$, then 
$\operatorname{link} (v,C) =  \operatorname{link} (v,C_2)$,
so the claim follows by inductive assumption, since $C_2$ has fewer facets. The case $v \notin C_2$ is symmetric. So without loss, we may assume $v \in C_1 \cap C_2$.
Set $L := \operatorname{link} (v,C)$ and $L_i := \operatorname{link} (v,C_i)$ ($i=1,2$). %and $L_2 := \operatorname{link} (v,C_2)$. 
Clearly $L = L_1 \cup L_2$. By induction, both $L_1$  and $L_2$  are $t$-constructible. It is easy to see that 
$L_1 \cap L_2 =  \operatorname{link} (v,C_1 \cap C_2)$
is pure $(d-2)$-dimensional. Thus to conclude that $L$ is $t$-constructible, we need to show that the $(d-1-t)$-skeleton of $L_1 \cap L_2$ is constructible. Since constructibility is closed under taking links, and since $S$ is constructible, it suffices to show that
\begin{equation} \label{claim}
(d-1-t)\operatorname{-skel}( L_1 \cap L_2) \ = \ \operatorname{link}(v,S).
\end{equation}
Let us prove relation (\ref{claim}). If $\sigma$ is in the left hand side, then %$\dim \sigma \le d-1-t$, and 
there is a $(d-2)$-face $F$ of $L_1 \cap L_2$ containing $\sigma$. The $(d-1)$-face $v \ast F$ is thus a %face of both $C_1$ and $C_2$, so it is a 
facet of $C_1 \cap C_2$.  Since $v \ast F$ contains $v \ast \sigma$, it follows that $v \ast \sigma$ is in $S$, so $\sigma$ is in $\operatorname{link}(v,S)$. Conversely, if $\sigma$ is in $\operatorname{link}(v,S)$, then $v \ast \sigma$ is contained in some $(d-1)$-face $F$ of $C_1 \cap C_2$. But then the $(d-2)$-face $F' = F \setminus \{v\}$ contains $\sigma$ and belongs to $L_1 \cap L_2$. So $\sigma$ is contained in the $(d-1-t)$-skeleton of $L_1 \cap L_2$.  \\
 (ii): Let $H_i$ be the $(t+1)$-simplex of consecutive vertices $[i, i+1, \ldots, i+t+1]$. Let $m \ge t+3$. Consider the ``tree of $m$ $d$-simplices''
\[ H_1, H_2, \ldots, H_m \] 
and let  $P$ be the simplicial complex obtained from it by gluing together the first and the last vertex, i.e., vertices $1$ and $m+t+1$. Clearly, $P$ is strongly-connected, but the link of $1$ in $P$ consists of two disjoint $t$-simplices. Now choose any shellable $(t+2)$-ball $B$ that contains in its boundary a copy of $P$. Glue together two copies of $B$ by identifying the corresponding copies of~$P$, and call $M$ the resulting $(t+2)$-dimensional pseudomanifold. By Lemma \ref{lem:glue}, $M$ is $t$-LC, because the $t$-skeleton of $P$ is strongly-connected. However, the link of $1$ inside $M$ consists of two $(t+1)$-balls glued together at a disjoint union of two  $t$-simplices: In other words, the link of $1$ inside $M$ is homeomorphic to $\mathbb{S}^1 \times \mathbb{I}^t$, and thus homotopy equivalent to  a $1$-sphere. Since it is not simply-connected, the link of $1$ inside $M$ is not $t$-LC, and in particular not $t$-constructible. In fact, by Proposition \ref{prop:depth}, any triangulation of the link of $1$, being homeomorphic to $\mathbb{S}^1 \times \mathbb{I}^t$, is not $t$-constructible. In particular, $M$ (and any subdivision of it) cannot be $t$-constructible, by Part (i) above.
\end{proof}

%Despite this important difference on links, $t$-constructibility and the $t$-LC property behave similarly when it comes to cones:

\begin{proposition} \label{prop:cone} Let $t \le d$ be positive integers. Let $C$ be a $d$-pseudomanifold. Let $v$ be a new point. 
$C$ is $t$-LC if and only if $v \ast C$ is $t$-LC.
\end{proposition}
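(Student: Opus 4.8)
The plan is to exploit that coning commutes with boundary identifications, so that $t$-local constructions of $C$ and of $v \ast C$ are in essentially canonical correspondence. First I would record the combinatorial anatomy of the cone: $v \ast C$ is a $(d+1)$-pseudomanifold with $\partial(v \ast C) = C \cup (v \ast \partial C)$, and --- this is the fact I will lean on most --- \emph{every interior $d$-face of $v \ast C$ contains the apex $v$}. Indeed, the $d$-faces of $v \ast C$ are either facets $\sigma$ of $C$ (each lying in the single facet $v \ast \sigma$, hence on the boundary) or faces $v \ast \rho$ with $\rho$ a $(d-1)$-face of $C$; only the latter type can be interior, and it contains $v$. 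I would also note that identifying two boundary facets and then coning yields the same complex as coning first and then identifying the coned facets.

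For the easy direction ($C$ $t$-LC $\Rightarrow v \ast C$ $t$-LC): starting from a tree $T$ of $d$-simplices and a $t$-local construction of $C$, I cone everything. The cone $v \ast T$ is again a tree, now of $(d+1)$-simplices, since its dual graph is isomorphic to that of $T$. Each gluing of boundary $(d-1)$-faces $\Delta \sim \Delta'$ with $\dim(\Delta \cap \Delta') \ge d-1-t$ is replaced by the gluing $v \ast \Delta \sim v \ast \Delta'$; these are boundary $d$-faces (using $\partial(v\ast K) = K \cup v\ast\partial K$), and $\dim(v \ast (\Delta\cap\Delta')) = \dim(\Delta \cap \Delta')+1 \ge d-t = (d+1)-1-t$, which is exactly the $t$-LC threshold in dimension $d+1$. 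Running the coned gluings in the same order produces $v \ast C$.

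The hard direction ($v \ast C$ $t$-LC $\Rightarrow C$ $t$-LC) is where the real work lies, since an arbitrary $t$-local construction of $v \ast C$ is a priori not a coned one, and I must show that it secretly is. I would fix such a construction from a tree $T$ of $(d+1)$-simplices, with quotient map $q \colon T \to v \ast C$. As $q$ restricts to an isomorphism on each simplex and every facet of $v \ast C$ contains $v$, each $(d+1)$-simplex of $T$ has a unique ``apex vertex'' mapping to $v$. For two adjacent simplices of $T$, their shared $d$-face is interior in $T$, hence maps to an interior $d$-face of $v \ast C$, which by the anatomy above contains $v$; therefore the shared face contains the apex, forcing the two apex vertices to coincide. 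Since the dual graph of $T$ is connected, \emph{all} apex vertices are one and the same vertex $\hat v$, that is, $T = \hat v \ast B$ for a tree $B$ of $d$-simplices.

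Finally I would check, by induction along the construction, that the cone structure is preserved at every step: each gluing identifies two boundary $d$-faces that become interior, hence map to an interior face containing $v$; since at that stage $\hat v$ is still the only vertex over $v$, both glued faces have the form $\hat v \ast \tau$ and $\hat v \ast \tau'$, with $\tau, \tau'$ boundary $(d-1)$-faces of the current de-coned complex. Identifying $\tau \sim \tau'$ is then a $t$-LC gluing in dimension $d$, because $\dim(\tau \cap \tau') = \dim\bigl(\hat v\ast(\tau\cap\tau')\bigr) - 1 \ge (d-t)-1 = d-1-t$. De-coning the whole construction thus exhibits a $t$-local construction of $C$ from the tree $B$, proving $C$ is $t$-LC. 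The main obstacle is the forced-cone step for $T$; once that is in place, the induction preserving the cone structure through the gluings is routine bookkeeping with the same dimension count as in the easy direction.
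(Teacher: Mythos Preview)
Your proof is correct. The easy direction matches the paper's verbatim. For the hard direction, you and the paper ultimately do the same thing---translate a $t$-local construction of $v\ast C$ into one of $C$---but you organize it differently, and your organization is cleaner.

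The paper (following \cite[Proposition 3.25]{BBLc}) works step by step without first analyzing the tree: at each gluing $F\sim G$ it notes that both $F$ and $G$ must contain ``a copy of $v$'' (because they become interior), then splits into two cases according to whether a chosen $(d-t)$-face $\sigma\subset F\cap G$ contains such a copy. In the second case the paper speaks of ``possibly many gluings'' in the de-coned picture, and the details are deferred to \cite{BBLc}. By contrast, you front-load the structural content: you prove that the tree $T$ itself is a cone $\hat v\ast B$ over a single apex, using that every interior $d$-face of $T$ maps to an interior $d$-face of $v\ast C$ (hence contains $v$), together with connectivity of the dual graph. Once this is known, there is exactly one vertex over $v$ at every stage, so every glued pair automatically has the form $\hat v\ast\tau,\ \hat v\ast\tau'$; the case split disappears and each step de-cones to a single $t$-LC gluing $\tau\sim\tau'$, since $F\cap G=\hat v\ast(\tau\cap\tau')$ forces $\dim(\tau\cap\tau')\ge d-1-t$. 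What you gain is a self-contained argument with no reference to \cite{BBLc}; what the paper's approach gains is that it does not need the preliminary forced-cone lemma, at the cost of a sketchier case analysis.
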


\begin{proof} The ``only if'' direction is easy: Suppose $C$ is obtained from a tree of $d$-simplices $T$ with a sequence of $t$-LC gluings, where the $i$-th gluing identifies faces $F_i$ and $F'_i$ with intersection of dimension $\ge d-1-t$. Then $v \ast C$ is obtained from the tree $v \ast T$ of $(d+1)$-simplices with the sequence of gluings that at the step $i$ glues together $v \ast F_i$ and $v \ast F'_i$; and the intersection 
\[(v \ast F_i) \cap (v \ast F'_i) =  v \ast (F_i \cap F'_i)\]
has dimension $\ge d-t$, so all these steps are legitimate $t$-LC gluings.

The ``if'' direction is perhaps more surprising, because as we saw in Proposition  \ref{prop:Linkt}, the $t$-LC property is not maintained by links.
Yet a similar argument of \cite[Proposition 3.25]{BBLc} works.  Suppose $v \ast C$ is $t$-LC and  let $T_i \rightarrow T_{i+1}$ be any step in some $t$-local construction of $v \ast C$. This step glues two $d$-faces $F$ and $G$ of $\partial T_i$ sharing a $(d-t)$-face $\sigma$. Since $F$ and $G$ will end up in the interior of $v\ast C$, both contain a copy of $v$, since $C \subset \partial (v\ast C)$. If $\sigma$ contains $v'$, a copy of $v$, then by gluing $F=v'\ast F'$ with $G=v'\ast G'$, we glue $(d-1)$-faces $F'$ and $G'$ sharing the $(d-t-1)$-face $\tau$, where $\sigma = v'\ast \tau$. But if $\sigma$ does not contain a copy a $v$, then $\sigma \in C$. Gluing $F$ and $G$ corresponds to possibly many gluings of $(d-1)$-faces $F'$ and $G'$, where $F', G'$ share $\sigma$ and do not contain any copy of $v$. Hence performing these gluings one by one, following the $t$-local construction of $v\ast C$, we eventually obtain a $t$-local construction for $C$.
\end{proof}

\begin{corollary} 
\label{cor:tLCsuspensions} Let $t \le d$ be positive integers. If a $d$-pseudomanifold is $t$-LC, its suspension is $t$-LC.
\end{corollary}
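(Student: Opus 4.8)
The plan is to realize the suspension as a gluing of two cones, and then to invoke the two facts we already have at our disposal: Proposition \ref{prop:cone} (coning preserves $t$-LC) and Lemma \ref{lem:glue} (gluing two $t$-LC pseudomanifolds along a well-connected codimension-one subcomplex preserves $t$-LC). Concretely, write the suspension as
\[ \Sigma C \ = \ (v \ast C) \cup (w \ast C), \]
where $v$ and $w$ are the two new apex points. The two cones meet exactly in $C$: a face of $v \ast C$ and a face of $w \ast C$ can coincide only if neither contains an apex, i.e.\ only if both lie in $C$. Thus $\Sigma C$ is a $(d+1)$-pseudomanifold, split as $C_1 \cup C_2$ with $C_1 \cap C_2 = C$.

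First I would record that each cone $C_1 = v \ast C$ and $C_2 = w \ast C$ is $t$-LC. This is immediate from the ``only if'' direction of Proposition \ref{prop:cone}, applied to the hypothesis that $C$ is $t$-LC. So both $C_1$ and $C_2$ are $t$-LC pseudomanifolds of dimension $d+1$, and their intersection $C$ is pure of dimension $d$, that is, pure of dimension $(d+1)-1$ — exactly the codimension-one condition demanded by Lemma \ref{lem:glue} when the ambient dimension is $d+1$.

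Next I would verify the remaining skeletal hypothesis of Lemma \ref{lem:glue} in ambient dimension $d+1$: the $\big((d+1)-t\big)$-skeleton of the intersection $C$ must be strongly connected. Since $t \ge 1$, we have $1 \le (d+1)-t \le d$, so the relevant skeleton is the $k$-skeleton of $C$ for some $k \in \{1, \ldots, d\}$. By Lemma \ref{lem:skeleton}(ii), every such skeleton of the $t$-LC pseudomanifold $C$ is strongly connected. Applying Lemma \ref{lem:glue} then yields that $\Sigma C$ is $t$-LC, which is the claim.

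I expect no genuine obstacle here: the statement is a formal consequence of results already proved, so the work is entirely bookkeeping. The only two points requiring care are checking that the intersection of the two cones is \emph{exactly} $C$ (hence pure of the correct dimension $(d+1)-1$), and that the skeletal index $(d+1)-t$ lands in the range $\{1,\dots,d\}$ covered by Lemma \ref{lem:skeleton}(ii); both follow at once from $t \ge 1$.
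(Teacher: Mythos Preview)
Your proof is correct and follows essentially the same approach as the paper: write the suspension as the union of two cones, invoke Proposition~\ref{prop:cone} on each cone, verify the skeletal hypothesis via Lemma~\ref{lem:skeleton}(ii), and conclude with Lemma~\ref{lem:glue}. The only difference is that you spell out a couple of sanity checks (that the cones intersect exactly in $C$, and that the skeletal index $(d+1)-t$ is in range) which the paper leaves implicit.
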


\begin{proof} Let $A$ be a $t$-LC $d$-pseudomanifold. Let $v_1, v_2$ be two new vertices. Let $C_i = v_i \ast A$.  By Proposition \ref{prop:cone}, each $C_i$ is $t$-LC. Glue $C_1$ and $C_2$ together by identifying the two copies of $A$. By Lemma \ref{lem:skeleton}, $A$ has strongly-connected $(d+1-t)$-skeleton. By Lemma \ref{lem:glue}, $C_1 \cup C_2$, which is the suspension of $A$, is $t$-LC. 
 \end{proof}

\begin{corollary} \label{cor:hierarchy} For any $d \ge 2$, for any $1 \le t \le d-1$, there exists a $d$-pseudomanifold that is $(t+1)$-LC, but not $t$-LC.
\end{corollary}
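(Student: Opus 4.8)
The plan is to take the vertex link already built in Proposition \ref{prop:Linkt}(ii) as the base example for dimension $t+1$, and then to climb up to dimension $d$ by repeated coning, using Proposition \ref{prop:cone} to control the LC degree at each step. The guiding idea is that the two desired properties sit on opposite sides of the ``big jump'' from $(d-1)$-LC to $d$-LC: a non-simply-connected manifold of dimension $t+1$ is $(t+1)$-LC (simply because it is a strongly-connected pseudomanifold of its own dimension) yet it fails to be $t$-LC (because $t$-LC manifolds of dimension $t+1$ are simply-connected).

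First I would set $L := \operatorname{link}(1, M)$, where $M$ is the $(t+2)$-dimensional pseudomanifold of Proposition \ref{prop:Linkt}(ii). As recorded there, $L$ is a $(t+1)$-dimensional manifold homeomorphic to $\mathbb{S}^1 \times \mathbb{I}^t$; in particular it is connected, hence strongly-connected, but not simply-connected. Being a $(t+1)$-dimensional strongly-connected pseudomanifold, $L$ is automatically $(t+1)$-LC, since every $d$-dimensional strongly-connected pseudomanifold is $d$-LC (the case $t=d$ of Definition \ref{def:tLC}). On the other hand, as a non-simply-connected $(t+1)$-manifold, $L$ cannot be $(t+1)-1 = t$-LC: this is precisely the observation, already invoked inside Proposition \ref{prop:Linkt}(ii), that a non-simply-connected manifold is $d$-LC but never $(d-1)$-LC. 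Thus $L$ itself settles the case $d=t+1$.

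Next, for $d > t+1$ I would iterate the cone. Choosing new points $v_1, \ldots, v_{d-t-1}$, set $L^{(0)} := L$ and $L^{(j)} := v_j \ast L^{(j-1)}$, so that $L^{(j)}$ is a pseudomanifold of dimension $t+1+j$ and $L^{(d-t-1)}$ is $d$-dimensional. By Proposition \ref{prop:cone} applied with parameter $t+1$, the complex $v_j \ast L^{(j-1)}$ is $(t+1)$-LC if and only if $L^{(j-1)}$ is; applied with parameter $t$, it is $t$-LC if and only if $L^{(j-1)}$ is. Since $\dim L^{(j-1)} = t+j \ge t+1$, both applications are legitimate. A straightforward induction on $j$ then shows that every $L^{(j)}$ is $(t+1)$-LC and none is $t$-LC, so $L^{(d-t-1)}$ is the required $d$-pseudomanifold.

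I expect the only delicate point to be the negative half of the statement, namely that the cones remain non-$t$-LC, because the $t$-LC property is generally not inherited by links and behaves subtly under coning. Here, however, this causes no trouble: Proposition \ref{prop:cone} is an equivalence, so the statement ``$L^{(j-1)}$ is not $t$-LC'' transfers verbatim to $v_j \ast L^{(j-1)}$, and no new fundamental-group computation is needed beyond the one inherited from Proposition \ref{prop:Linkt}(ii). One should also note, though it is immediate, that a cone over a pseudomanifold is again a pseudomanifold, since coning sends each codimension-one face either to a face contained in at most two facets or to a genuine boundary face.
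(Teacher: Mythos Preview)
Your argument is correct and follows essentially the same strategy as the paper's proof: start from a non-simply-connected pseudomanifold in dimension $t+1$ (which is automatically $(t+1)$-LC but not $t$-LC), then cone repeatedly using the equivalence in Proposition~\ref{prop:cone} to reach dimension $d$. The paper packages the coning as an induction on $d$ and, for the base case $t=d-1$, simply invokes ``any non-simply-connected $d$-manifold'' (and for $d=2$, any non-sphere surface), whereas you fix the specific link $L\cong\mathbb{S}^1\times\mathbb{I}^t$ produced in Proposition~\ref{prop:Linkt}(ii); this extra specificity is harmless but unnecessary, since any non-simply-connected strongly-connected $(t+1)$-pseudomanifold would serve equally well.
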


\begin{proof}
By induction on $d$. For $d=2$, any surface different from the sphere is $2$-LC, but not $1$-LC \cite{Durhuus}. 
For $d \ge 3$: If $t=d-1$, any non-simply-connected $d$-manifold is $(t+1)$-LC, but not $t$-LC. If instead $t \le d-2$, by the inductive assumption there is a  $(d-1)$-pseudomanifold that is $(t+1)$-LC, but not $t$-LC; coning over it, by Proposition \ref{prop:cone} we conclude. 
\end{proof}

\begin{remark}
The pseudomanifold constructed in the previous corollary is not a manifold. At the moment, we do not know an explicit example of a $d$-manifold that is $2$-LC but not $1$-LC. (See also Example \ref{ex:nonLC} below.) A good candidate for $d=5$ might be the double suspension $S$ of the $16$-vertex Poincar\'e homology sphere by Bj\"orner and Lutz \cite{BjLu}. This $S$ is not $1$-constructible, because it is not PL; some experiments with the Random Discrete Morse algorithm \cite{BeLu} seem to suggest that $S$ is likely not $1$-LC either. On the positive side, we do know that $S$ is (at most) $3$-LC, in view of Corollary~\ref{cor:tLCsuspensions}, part (ii), applied twice. 
\end{remark}

\begin{corollary} For any $d \ge 3$, not all triangulated $d$-balls are $2$-LC.
\end{corollary}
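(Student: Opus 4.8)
The plan is to reduce everything to the three-dimensional case and then cone upward. The crucial input is the recent result of the first author \cite{BBMogami}, which disproves Mogami's conjecture by exhibiting an explicit triangulated $3$-ball $B_3$ that is \emph{not} $2$-LC. I would take this as the base case of an induction on $d$, and build the higher-dimensional examples as iterated cones over it.

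For the inductive step, recall that the cone $v \ast B$ over any triangulated $d$-ball $B$ is again a triangulated ball, now of dimension $d+1$: topologically the cone over $\mathbb{D}^d$ is $\mathbb{D}^{d+1}$, and combinatorially, joining a fresh apex to a triangulated ball yields a triangulated ball of one higher dimension. Hence, setting $B_d := v_d \ast B_{d-1}$ for $d \ge 4$, where each $v_d$ is a new apex, produces a triangulated $d$-ball for every $d \ge 3$.

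Now I would invoke Proposition \ref{prop:cone} with $t = 2$: a $d$-pseudomanifold $C$ is $2$-LC if and only if $v \ast C$ is $2$-LC. Since every ball is in particular a pseudomanifold, this applies to each $B_{d-1}$, so that $B_d$ is $2$-LC precisely when $B_{d-1}$ is. As $B_3$ is not $2$-LC, an immediate induction shows that no $B_d$ is $2$-LC, which establishes the corollary for all $d \ge 3$.

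The only genuinely hard part of this argument is the base case, and that difficulty is entirely external: it rests on the combinatorially delicate non-$2$-LC $3$-ball of \cite{BBMogami}. Once that example is granted, the ``if'' direction of Proposition \ref{prop:cone} does the remaining work essentially for free, so the main obstacle within the present paper is merely to confirm that coning preserves the ball property and that Proposition \ref{prop:cone} applies to balls — both of which are routine.
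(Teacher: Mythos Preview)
Your proof is correct and follows exactly the paper's own approach: the paper's proof reads, in its entirety, ``In view of Proposition~\ref{prop:cone}, we only need to construct a $3$-ball that is not $2$-LC (or equivalently, not Mogami), a task that was already carried out in \cite{BBMogami}.'' You have simply unpacked this sentence in detail, correctly noting that the cone over a ball is a ball and that the ``if'' direction of Proposition~\ref{prop:cone} is the one needed for the contrapositive.
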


\begin{proof} In view of Proposition \ref{prop:cone}, we only need to construct a $3$-ball that is not $2$-LC (or equivalently, not Mogami), a task that was already carried out in \cite{BBMogami}. 
\end{proof}

\begin{remark}
For any $t \le d$, being  $t$-LC is a property that is algorithmically recognizable, simply by trying all possible ``spanning trees of $d$-simplices'' and all possible boundary matchings. 
%A consequence of the Poincar\'e conjecture  (proven for $d \ge 5$  by Smale \cite{Smale} and for $d=4$ by Freedman \cite{Freedman}) and of 
Moreover, for any $t \le d-1$, being $t$-LC implies being simply connected. Thus in view of the Poincar\'e conjecture (proven for $d \ge 5$  by Smale \cite{Smale} and for $d=4$ by Freedman \cite{Freedman}), if $M$ is a $d$-manifold with the homology of a sphere, the fact that  $M - \Delta$ is $t$-LC  for some facet $\Delta$ implies that $M$ is a $d$-sphere. So were all $d$-balls $t$-LC for some $t \le d-1$, then for any manifold $M$  we could decide if $M$ is a $d$-sphere or not just, first by checking whether $M$ has the homology of a sphere,  and then by checking whether $M$ minus some facet is $t$-LC. 
In conclusion, S.~P.~Novikov's theorem on the algorithmic unrecognizability of $d$-spheres for $d \ge 5$ ~\cite{VKF} implies that for every $d \ge 5$ and every $t \le d-1$, there must exist $d$-balls that are not $t$-LC. It is conjectured that also $4$-balls are not algorithmically recognizable, which would imply in the same non-constructive way that some $4$-balls are not $3$-LC.
%We have already noticed that all $d$-balls and all $d$-spheres are $d$-LC. In contrast, for any $t \le d-1$, being $t$-LC implies being simply connected. Thus in view of the Poincare' conjecture (proven for $d \ge 5$  by Smale \cite{Smale} and for $d=4$ by Freedman \cite{Freedman}), if $M$ is a $d$-manifold with the same homology of a sphere, and $M - \Delta$ is $t$-LC  for some facet $\Delta$, then $M$ is a $d$-sphere. Were all $d$-balls $t$-LC for some $t \le d-1$, then for any manifold $M$  we could decide if $M$ is a $d$-sphere or not just, first by checking whether $M$ has the same homology of a sphere,  and then by checking whether $M$ minus some facet is $t$-LC. Yet Sergei Novikov proved the algorithmic unrecognizability of $d$-spheres for $d \ge 5$ ~\cite{VKF}. % (by reducing it to the word problem, which had been proven undecidable by his father Pyotr). 
\end{remark}

%\newpage
%%%%%%%%%%%%%%%%%%%%%%%%%%%%%%%%%%%%%%%%%%%%%%%%%%%
%%%%%%%%%%%%%%%%%%%%%%%%%%%%%%%%%%%%%%%%%%%%%%%%%%%
\section{An exponential bound for 2-LC manifolds}
%%%%%%%%%%%%%%%%%%%%%%%%%%%%%%%%%%%%%%%%%%%%%%%%%%%
%%%%%%%%%%%%%%%%%%%%%%%%%%%%%%%%%%%%%%%%%%%%%%%%%%%
Since $1$-LC $d$-manifolds are exponentially many, while $d$-LC $d$-manifolds (which is the same as saying ``all $d$-manifolds'') are more than exponentially many,  a natural question is whether one can give exponential bounds  for $t$-LC $d$-manifolds also for some $t$ larger than $1$. In this section we realize a first step in this direction: For fixed $d \geq 3$, we prove that there are less then $2^{\frac{d^3}{2} N }$ combinatorially distinct simplicial 2-LC $d$-manifolds with $N$ facets (Theorem \ref{thm:main}). 

%%%%%%%%%%%%%%%%%%%%%%%%%%%%%%%%%%%%%%%%%%%%%%%%%%%
\subsection{Excluding some gluings}
\label{sec:Gluings_and_Link_Transformations}
%%%%%%%%%%%%%%%%%%%%%%%%%%%%%%%%%%%%%%%%%%%%%%%%%%%
First we establish which 2-LC gluings can actually lead to a manifold without boundary.  %Let $M$ to be a triangulated manifold (with boundary).
    \begin{lemma}
    \label{lemma:gluings}
        Only the $2$-LC gluings satisfying all the conditions below can lead to a triangulated manifold without boundary:
        \begin{compactenum}[ \rm (i)]
            \item preserving orientability of links of $(d-3)$-faces (Figure \ref{fig:GL111}),
            \item planar with respect to the involved links of $(d-3)$-faces (Figure \ref{fig:GL2ndBad1}),
            \item impacting only on the boundaries of the links of $(d-3)$-faces.
        \end{compactenum}
    Moreover, the number of ways we can glue the boundary facets to one another is completely determined by looking at edges of the boundaries of links of $(d-3)$-faces.
    \end{lemma}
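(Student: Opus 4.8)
The plan is to translate every $2$-LC gluing into an identification taking place inside a single link of a $(d-3)$-face, and then to read off its multiplicity from the link--boundary commutation recalled at the start of this section.

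First, I would fix one $2$-LC gluing, identifying two boundary facets $\Delta$ and $\Delta'$. Since the complex is simplicial, their common face $\sigma = \Delta \cap \Delta'$ is a simplex of dimension $\geq d-3$, and since $\Delta \neq \Delta'$ we have $\dim \sigma \in \{d-3, d-2\}$. The key preliminary observation is that the gluing map $\phi \colon \Delta \to \Delta'$ must restrict to the identity on $\sigma$: the two facets already share $\sigma$ as one and the same face of the current complex, so any identification producing a simplicial regular CW-complex has to respect it. Consequently $\phi$ is completely determined by the bijection it induces between the ``private'' vertices of $\Delta$ (those not in $\sigma$) and those of $\Delta'$. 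When $\dim \sigma = d-2$ each side has a single private vertex and the gluing is unique---this is the $1$-LC subcase---whereas when $\dim \sigma = d-3$ each side has exactly two private vertices, leaving at most two possible gluings.

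Next, I would pass to $\operatorname{link}(\sigma, K)$, which for a $(d-3)$-face $\sigma$ is $2$-dimensional. Writing $\Delta = \sigma \ast e$ and $\Delta' = \sigma \ast e'$, the faces $e$ and $e'$ are edges of $\operatorname{link}(\sigma, K)$; moreover, applying the link--boundary commutation with $k = d-3$ (so that the corresponding face $\tau' = \tau \setminus \sigma$ is one-dimensional), $e$ and $e'$ are boundary edges of $\operatorname{link}(\sigma, K)$ exactly because $\Delta$ and $\Delta'$ are boundary facets of $K$. Identifying $\Delta$ with $\Delta'$ via $\phi$ is then literally the same datum as identifying the edge $e$ with the edge $e'$, and the two admissible private-vertex bijections correspond to the two ways of matching the endpoints of $e$ and $e'$. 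Thus a $2$-LC gluing is recorded faithfully by an identification of two boundary edges in the boundary of the link of a $(d-3)$-face, with conditions (i)--(iii) selecting which of these edge-identifications actually yield a manifold.

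Finally, I would globalize: ranging over all gluings of a $2$-local construction, the admissible facet matchings are in bijection with the admissible edge matchings in the boundaries of the links of $(d-3)$-faces, so the number of gluings can be read off from that edge data alone. The main obstacle I anticipate is the first step, namely justifying that $\phi$ is forced to fix $\sigma$ and is therefore encoded by a single edge-matching in the one link $\operatorname{link}(\sigma, K)$, rather than by independent choices spread over the $\binom{d}{2}$ links of the $(d-3)$-faces of $\Delta$. The resolution is that once the private-vertex bijection is fixed inside $\operatorname{link}(\sigma, K)$, the vertex map $\phi$---and hence the identifications it induces in all the remaining links---is entirely determined, so no further freedom enters the count.
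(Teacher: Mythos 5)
Your dictionary between a $2$-LC gluing $\Delta\sim\Delta'$ along a $(d-3)$-face $\sigma$ and an identification of two boundary edges of $\operatorname{link}(\sigma,K)$ is correct, and it matches the paper's starting point (the paper likewise notes that the gluing fixes $\sigma$ and leaves exactly two ways to match the endpoints of the edges $[a_0,a_1]$ and $[b_0,b_1]$). But from there the proposal stops short of proving the lemma: the sentence ``conditions (i)--(iii) selecting which of these edge-identifications actually yield a manifold'' restates the conclusion rather than establishing it. What must be shown is that a gluing \emph{violating} any of (i)--(iii) cannot lead to a closed manifold. For (i) and (ii) this is quick (a non-orientable or non-planar matching prevents the link from ever becoming $S^2$, by Jordan--Schoenflies), but condition (iii) is the substantive part and is entirely missing from your argument. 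The point is that the single gluing $\Delta\sim\Delta'$ does not only act on $\operatorname{link}(\sigma,K)$: it also identifies, or connects, the links of the \emph{other} $(d-3)$-faces $\delta\subset\Delta$ with those of the corresponding $\delta'\subset\Delta'$, and one must rule out that these induced identifications touch interior edges or interior vertices of those links. The paper does this by a case analysis: an interior edge of $\operatorname{link}(\delta,T)$ corresponds to an interior $(d-1)$-face of the complex, so identifying it with an edge of $\operatorname{link}(\delta',T)$ would force two $(d-1)$-faces sharing $\delta\sim\delta'$ and that edge to be identified later, which is impossible for an interior face; and identifying an interior vertex creates a small circle in the link with the vertex on both sides, contradicting that the link must become $S^2$. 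You acknowledge that the vertex map ``determines the identifications in all the remaining links,'' but you never analyze what those induced identifications can be, which is exactly where the content of (iii) lives.

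The final claim of the lemma also needs more than the bijection you set up. After several gluings an interior edge of $\operatorname{link}(\delta,M)$ may correspond to \emph{two distinct boundary facets} of $M$; the paper's argument is that such a pair is forced to be identified eventually (otherwise the final complex is not simplicial), so these interior edges contribute no additional choices, and the count of gluing configurations is therefore governed by the boundary edges of the links alone. Without this observation, the statement that the number of gluings ``can be read off from that edge data alone'' does not follow from the edge-matching dictionary. In summary: the translation step is right and agrees with the paper, but the proof of (iii) and of the counting claim --- the actual substance of the lemma --- is absent.
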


    \begin{figure}[hbt] 
    \centering
		\subfloat[This gluing makes the link non-orientable, so the link will never be homeomorphic to $S^2$.  ]{\includegraphics[width=12em]{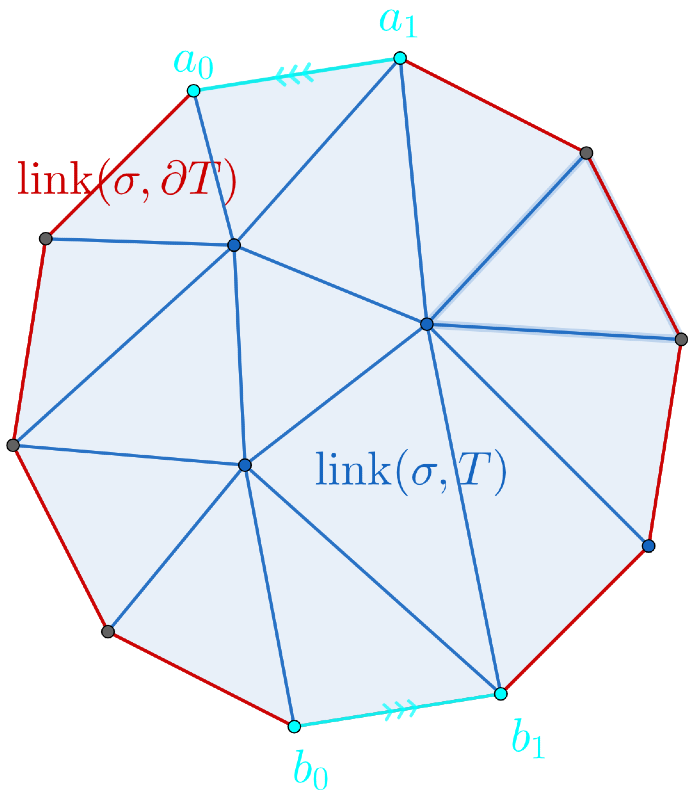} 
		\label{fig:GL111}}
		\hfill
		\subfloat[A non-planar gluing, by the Jordan-Schoenflies theorem,  makes the final link not homeomorphic to $S^2$. ]{\includegraphics[width=16em]{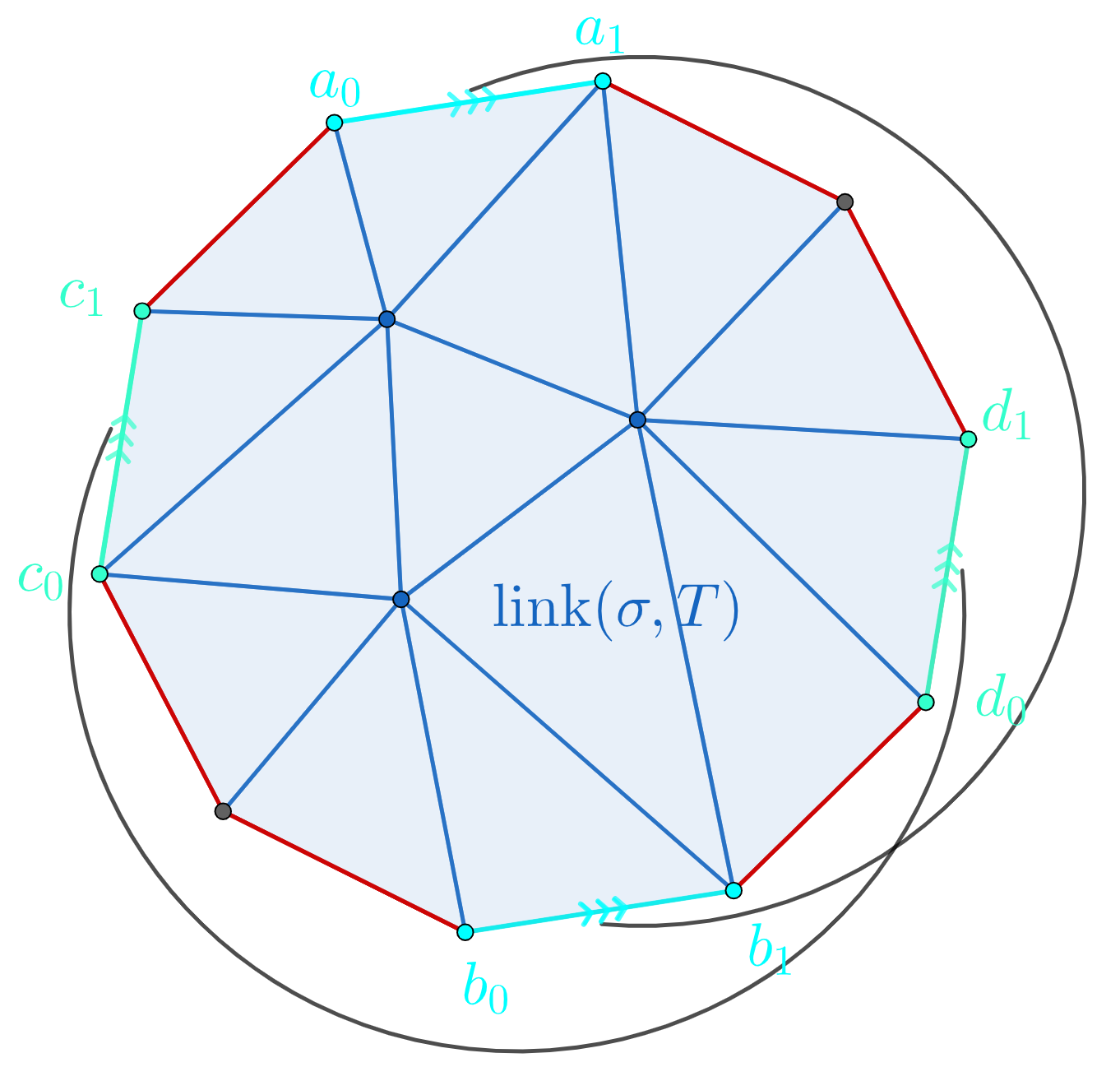}
		\label{fig:GL2ndBad1}}
		\caption{Ways of gluing that do not lead to a manifold without boundary.}
		\label{fig:GL12}
    \end{figure}

     \begin{proof}
         Let $T$ be a tree of $d$-simplices. 
         The link of a $(d-3)$-face $\sigma$ in $T$ is a triangulated disk, whose boundary circle is the boundary link of $\sigma$. Consider two boundary facets $E_1$ and $E_2$ sharing $\sigma$, and corresponding to edges $[a_0,a_1] \sim [b_0,b_1]$ in $\mathrm{link}(\sigma, T)$. There are two options for such an identification: $a_0 \sim b_1$ and $a_1 \sim b_0$, or $a_0 \sim b_0$ and $a_1 \sim b_1$.  As displayed in Figure \ref{fig:GL111}, the first option makes the link non-orientable, which cannot be fixed by any gluing. Also, if we further glue other two boundary facets $E_3$ and $E_4$ with respect to $\sigma$, 
      it is clear that any   ``non-planar matching'' creates a contradiction with the Jordan--Schoenflies' theorem, cf. Figure \ref{fig:GL2ndBad1}.
        So (i) and (ii) are clear.
        Now, any gluing $E_1 \sim E_2$ can affect the link of some other $(d-3)$-face $\delta$ in two possible ways: 
 \begin{compactitem}       
    \item If $\delta$ is contained in $E_i$ for some $i=1,2$, then there is a $(d-3)$-face $\delta'$ such that $\delta \sim \delta'$ as a consequence of $E_1 \sim E_2$. Suppose by contradiction that $\mathrm{link}(\delta, T)$ and $\mathrm{link}(\delta', T)$ are, or become, connected by an edge $[a_0,a_1]$ which is an interior edge of one of these links, say of $\mathrm{link}(\delta, T)$. Then $[a_0,a_1]$ corresponds to an interior $(d-1)$-face $F_1$ of $T$ in $\mathrm{St}(\delta, T)$ and it also corresponds to a $(d-1)$-face $F_2$ of $T$ in $\mathrm{St}(\delta', T)$. After the identification, $F_1$ and $F_2$ share the $(d-3)$-face $\delta\sim\delta'$ and the edge $[a_0,a_1]$. In order to end up with a simplicial complex, we need to identify $F_1$ and $F_2$ at some point, which is not possible since $F_1$ is an interior $(d-1)$-face. 
    En passant, note that $\mathrm{link}(\delta, T)$ and $\mathrm{link}(\delta', T)$ become connected with at least one boundary edge, since they are contained in $E_i$. Suppose the links are, or become, connected by a vertex $v$ that is an interior vertex of one of the original links, say of $\mathrm{link}(\delta, T)$. In that case, we create an $S^1$ just around $v$ (containing no other vertex) in the interior of $\mathrm{link}(\delta, T)$. In the link of $\delta \sim \delta'$ in the new complex, $v$ appears on both sides of the $S^1$, because now the two original links are connected by a boundary edge and at the same time no interior triangle or interior edge is identified. Which is a contradiction.
    \item If $\delta$ is contained in any $E_i$, it is not identified with another $(d-3)$-face. Here, it may happen that an edge or a vertex gets identified within the link itself. We can use the same reasoning as for the first case to conclude that no identifications can happen in the interior of the link.
 \end{compactitem}
   The arguments above work not only for a tree of $d$-simplices $T$, but also for any pseudomanifold obtained from $T$ by performing 2-LC gluings that satisfy the conditions of this lemma. 
   After connecting links or after an identification within one link, an interior edge of the link of a $(d-3)$-face can only correspond to
 \begin{compactitem}       
    \item either one interior $(d-1)$-face (if it was an interior edge of the link already),
    \item or two boundary facets (if it is a glued boundary edge of the link).
 \end{compactitem}
    All other options cannot lead to a manifold. 
    So the only possible issue  is an interior edge $[a_0,a_1]$ in the link of a $(d-3)$-face that corresponds to two distinct boundary facets. Assume $E_1$, $E_2$ are the two boundary facets in a pseudomanifold $M$ obtained from $T$ after some number of allowed 2-LC gluings; assume also that $E_1$ and $E_2$ are in $\mathrm{St}(\delta, \partial M)$, where $\delta$ is a $(d-3)$-face of $M$, and that $E_1$, $E_2$ share an edge $[a_0,a_1] \in \mathrm{link}(\delta, M)$. Now suppose we need to identify $[a_0,a_1]$ with another edge $[b_0,b_1]\in\mathrm{link}(\delta, M)$. The edge $[b_0,b_1]$ corresponds to (at least one) $(d-1)$-face of $\mathrm{St}(\delta, \partial M)$. Now we have three $(d-1)$-faces sharing $\delta$ and $[a_0,a_1] \sim [b_0,b_1]$. So to get a simplicial complex at the end, we need to identify all three of them, which is not possible. 
    In conclusion, no identification can be performed within interiors of links of $(d-3)$-faces. Which proves part (iii). 
    
  As for the final claim:  If there is an interior edge $[a_0,a_1]$ of $\mathrm{link}(\delta, M)$ that corresponds to two distinct boundary facets $E_1$ and $E_2$ in $\mathrm{St}(\delta, \partial M)$ for some $(d-3)$-face $\delta$ of $M$, then $E_1$ and $E_2$ share the face $\delta$ and $[a_0,a_1]$. Hence, they have to be identified at some point, in order to get a simplicial complex at the end of the 2-local construction. So the edge $[a_0,a_1]$ does not add to the number of ways in which one can glue boundary facets. Note that $E_1$, $E_2$ can be identified with respect to $\delta$ by a 2-LC gluing.
  
     \end{proof}

    The link of any $(d-3)$-face during a 2-local construction consists topologically of possibly punctured disks, connected by boundary edges and vertices to one another, or even to themselves. 

    \begin{lemma}
    \label{lemma:connected_components}
   The $2$-LC gluings that force an identification between two distinct connected components of the boundary of the link of a $(d-3)$-face do not lead to a manifold. 
    \end{lemma}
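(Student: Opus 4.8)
The plan is to exploit the fact that, for $M$ to be a manifold without boundary, the link $L := \mathrm{link}(\delta, M)$ of every $(d-3)$-face $\delta$ must be homeomorphic to $S^2$, together with the orientability and planarity already isolated in Lemma~\ref{lemma:gluings}. By parts (i) and (ii) of that lemma, every admissible gluing keeps each such $L$ orientable and planar, so at every stage of the construction $L$ is an orientable $2$-complex all of whose surface pieces have genus $0$, and all gluings act only on its boundary $\partial L = \mathrm{link}(\delta, \partial M)$. The goal is then to show that a gluing which merges two distinct connected components of $\partial L$ destroys this genus-$0$ property irreparably.

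The heart of the argument is a surface-surgery computation. First I would observe that a $2$-LC gluing forcing an identification between two components of $\partial L$ amounts, from the viewpoint of $L$, to identifying a boundary arc lying on one boundary circle $B_1$ of a connected piece of $L$ with a boundary arc on a \emph{different} boundary circle $B_2$ of that \emph{same} piece. Gluing two arcs on distinct boundary circles of a connected orientable surface merges those circles into one and drops the Euler characteristic by $1$ (two copies of a contractible arc are identified into one). For a genus-$0$ piece with $b$ boundary circles this sends $(\chi, b) = (2-b,\, b)$ to $(1-b,\, b-1)$; solving $1-b = 2-2g-(b-1)$ forces $g=1$. The model case is the annulus, of Euler characteristic $0$, which becomes a once-punctured torus: the genus jumps from $0$ to $1$, so $L$ ceases to be planar.

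Next I would argue that this newly created genus can never be removed. Among the gluings permitted by Lemma~\ref{lemma:gluings}, the genus of each piece is monotone non-decreasing: identifying two arcs of the \emph{same} boundary circle (orientation-preservingly) merely splits that circle in two and keeps genus $0$ (a disk becomes a cylinder), and identifying arcs on two \emph{different} connected pieces joins them along a contractible arc, again preserving genus. Since no allowed operation lowers the genus, and $S^2$ has genus $0$, once a cross-circle merge has produced a handle the link $L$ can never become $S^2$. As the link of every $(d-3)$-face of a boundaryless manifold must be $S^2$, no continuation of the construction can yield a manifold, which is exactly the claim.

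The step I expect to be the main obstacle is that during a $2$-local construction $L$ is typically \emph{not} a genuine surface: as recorded in the paragraph preceding the lemma, it is a union of (possibly punctured) disks joined to one another, and even to themselves, along boundary edges and vertices, so ``boundary component'' and ``genus'' must be handled at the level of these pinched $2$-complexes rather than of smooth surfaces. Two points need care here. I must verify that the two merged boundary components really lie on the same connected piece of $L$ --- this is where it matters that the operation is a self-identification inside $\mathrm{St}(\delta, \partial M)$, with $\delta$ the shared $(d-3)$-face, as opposed to the harmless case in which the gluing instead fuses the links of two \emph{distinct} faces $\delta \sim \delta'$ (the mechanism by which pieces are legitimately assembled without creating genus). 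And I must rule out that a pinch point could later be ``opened up'' to cancel the handle; this again follows from the monotonicity above, since opening a pinch is not among the boundary gluings allowed by Lemma~\ref{lemma:gluings}. Making these degenerate configurations precise, and checking that the Euler-characteristic bookkeeping survives the identification of boundary vertices, is the bulk of the work.
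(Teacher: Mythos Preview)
Your approach is correct and genuinely different from the paper's. The paper argues directly via Jordan--Schoenflies: assuming all prior gluings were legal, the link $L=\mathrm{link}(\delta,M)$ is (a pinched version of) a planar surface, so any two distinct components of $\partial L$ are separated by an embedded $S^1$ lying in the interior of $L$; identifying vertices $v_0$, $v_1$ from those two components places $v_0\sim v_1$ on both sides of that $S^1$, which can never be repaired into an $S^2$. That is the entire proof---no Euler-characteristic bookkeeping, no genus, no monotonicity lemma.

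Your route instead quantifies the obstruction as a jump in genus and then shows genus is non-decreasing under the admissible boundary gluings. This buys you something the paper's one-liner does not make explicit: a clean invariant (genus, or equivalently $b_1$) that tracks through the whole construction, and the case analysis you sketch (same circle vs.\ different circles vs.\ different pieces) is exactly what is needed to see why the only genus-raising move is the forbidden one. The cost is the extra care you already flag: $L$ is only a pinched surface, so ``genus'' and ``boundary circle'' have to be read through the planar embedding (equivalently, through a regular neighborhood in $S^2$). Note, incidentally, that your worry about the two boundary components lying on different connected pieces of $L$ is moot: the link of a fixed $(d-3)$-face starts as a disk and is only ever enlarged by edge-attachments or self-identifications, so it stays connected throughout---which is also what makes the paper's separating $S^1$ exist. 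Once you make that observation, your Euler-characteristic computation goes through cleanly and yields the same conclusion with slightly more machinery but slightly more information.
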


    \begin{proof}
        Denote $M$ a pseudomanifold obtained from a tree of $d$-simplices by performing 2-LC gluings that satisfy the conditions of Lemma \ref{lemma:gluings} and this lemma. Assume for a contradiction that we identify vertices $v_0$ and $v_1$ from two distinct connected components of $\partial \mathrm{link}(\delta, M)$, where $\delta$ is a $(d-3)$-face of $M$. We then create an $S^1$ in the interior of the link with the connected component containing $v_0$ fully inside this $S^1$, and the connected component containing $v_1$ fully outside. In the link of $\delta$ in the new complex, $v_0\sim v_1$ appears on both sides of the $S^1$, which is a contradiction using Jordan--Schoenflies' theorem as we need this link to eventually become $S^2$.
    \end{proof}

%%%%%%%%%%%%%%%%%%%%%%%%%%%%%%%%%%%%%%%%%%%%%%%%%%%
\subsection{Bounding the 2-LC gluings}
%%%%%%%%%%%%%%%%%%%%%%%%%%%%%%%%%%%%%%%%%%%%%%%%%%%

\begin{lemma}     Let $\sigma$ be a $(d-3)$-face in a tree $T$ of $N$ $d$-simplices. Let $m$ be the number of facets of $\partial T$ containing $\sigma$. Let $M(m)$ be the number of  ways we can glue those boundary facets among each other. Then we have the inequality
    \begin{equation}
    \label{eq:MmBound}
         M(m) \leq \ C_m \leq \ 4^m, 
    \end{equation}
    where $C_m= \frac{1}{m+1} \binom{2m}{m}$ is the $m$-th Catalan number. 
\end{lemma}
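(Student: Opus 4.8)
The plan is to reduce the quantity $M(m)$ to a purely planar-combinatorial count on the boundary circle of a triangulated disk, and then bound that count by the Catalan number. As already recorded in the proof of Lemma \ref{lemma:gluings}, since $T$ is a simplicial $d$-ball and $\sigma$ lies in $\partial T$, the link $\mathrm{link}(\sigma,T)$ is a triangulated $2$-disk whose boundary is a single circle. Using the link--boundary commutation from Section 2, the $m$ boundary facets of $\partial T$ containing $\sigma$ correspond exactly to the $m$ boundary edges of this disk, which therefore sit in convex (cyclic) position along $\partial\,\mathrm{link}(\sigma,T)$. So the whole problem takes place on a cycle of $m$ edges.

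Next I would invoke Lemma \ref{lemma:gluings} to translate admissible gluings into matchings. The orientability condition (i) forces each identification of two of these edges to be carried out in the \emph{unique} orientation-preserving way, killing the factor of two per glued pair; the planarity condition (ii) forces the chords joining matched edges to be pairwise non-crossing inside the disk; and condition (iii), together with Lemma \ref{lemma:connected_components}, guarantees that no interior edge (and no bad cross-component identification) is ever involved. Consequently every admissible way of gluing the $m$ facets to one another is encoded by a non-crossing \emph{partial} matching of $m$ points in convex position, with distinct gluing patterns giving distinct matchings. Hence $M(m)$ is at most the number $P(m)$ of non-crossing partial matchings of $m$ points on a circle.

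It then remains to show $P(m)\le C_m$. I would cut the circle at one point to pass to $m$ points on a line, which preserves the non-crossing property, and run the standard Catalan-type induction: conditioning on whether the first point is unmatched, or matched to the $j$-th point (splitting the remaining points into $j-2$ inside and $m-j$ outside the chord), gives the recursion $P(m)=P(m-1)+\sum_{i=0}^{m-2}P(i)P(m-2-i)$, with $P(0)=P(1)=1$. Comparing with $C_m=\sum_{i=0}^{m-1}C_iC_{m-1-i}$ and retaining only the extreme terms $i=0$ and $i=m-1$ shows $C_m\ge 2C_{m-1}$ for $m\ge 2$; together with $\sum_{i=0}^{m-2}C_iC_{m-2-i}=C_{m-1}$, induction yields $P(m)\le C_{m-1}+C_{m-1}=2C_{m-1}\le C_m$, the base cases being immediate. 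Finally $C_m=\frac{1}{m+1}\binom{2m}{m}\le\binom{2m}{m}\le 2^{2m}=4^m$ gives the second inequality.

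The step I expect to be the main obstacle is the first reduction: making fully precise that ``the number of ways we can glue the boundary facets among each other'' is genuinely controlled by non-crossing matchings on the circle, i.e.\ that the constraints of Lemmas \ref{lemma:gluings} and \ref{lemma:connected_components} leave no freedom beyond the choice of a planar matching, and that the cyclic-versus-linear distinction does not inflate the count. Once this dictionary is in place, the combinatorial estimate $P(m)\le C_m\le 4^m$ is routine.
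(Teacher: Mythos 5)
Your proposal is correct and follows essentially the same route as the paper: both reduce the count to non-crossing partial matchings of the $m$ boundary edges of the disk $\mathrm{link}(\sigma,T)$, derive the recursion $M(m)=M(m-1)+\sum_{m_1+m_2=m-2}M(m_1)M(m_2)$, and compare it inductively with the Catalan recursion. The only (immaterial) difference is the final inductive step, where you use $\sum_{i}C_iC_{m-2-i}=C_{m-1}$ and $2C_{m-1}\le C_m$, while the paper instead uses monotonicity of $M$ to absorb the recursion into the full Catalan convolution $\sum_{m_1+m_2=m-1}M(m_1)M(m_2)\le C_m$.
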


\begin{proof}
     When we glue  with respect to the $(d-3)$-face $\sigma$, we choose a boundary facet containing $\sigma$ and 
    \begin{compactenum}[ (a)]
      \item  either we leave it unidentified (Figure \ref{fig:Rec2}), which corresponds to choosing an edge of $\partial \mathrm{link}(\sigma, T)$ and `leaving it alone', 
     \item or we glue it with another boundary facet containing $\sigma$ (Figure \ref{fig:Rec1}), which corresponds to choosing two edges of $\partial \mathrm{link}(\sigma, T)$ and matching them.  
    \end{compactenum}
    \begin{figure}[hbt] 
    \centering
		\subfloat[]{\includegraphics[width=12em]{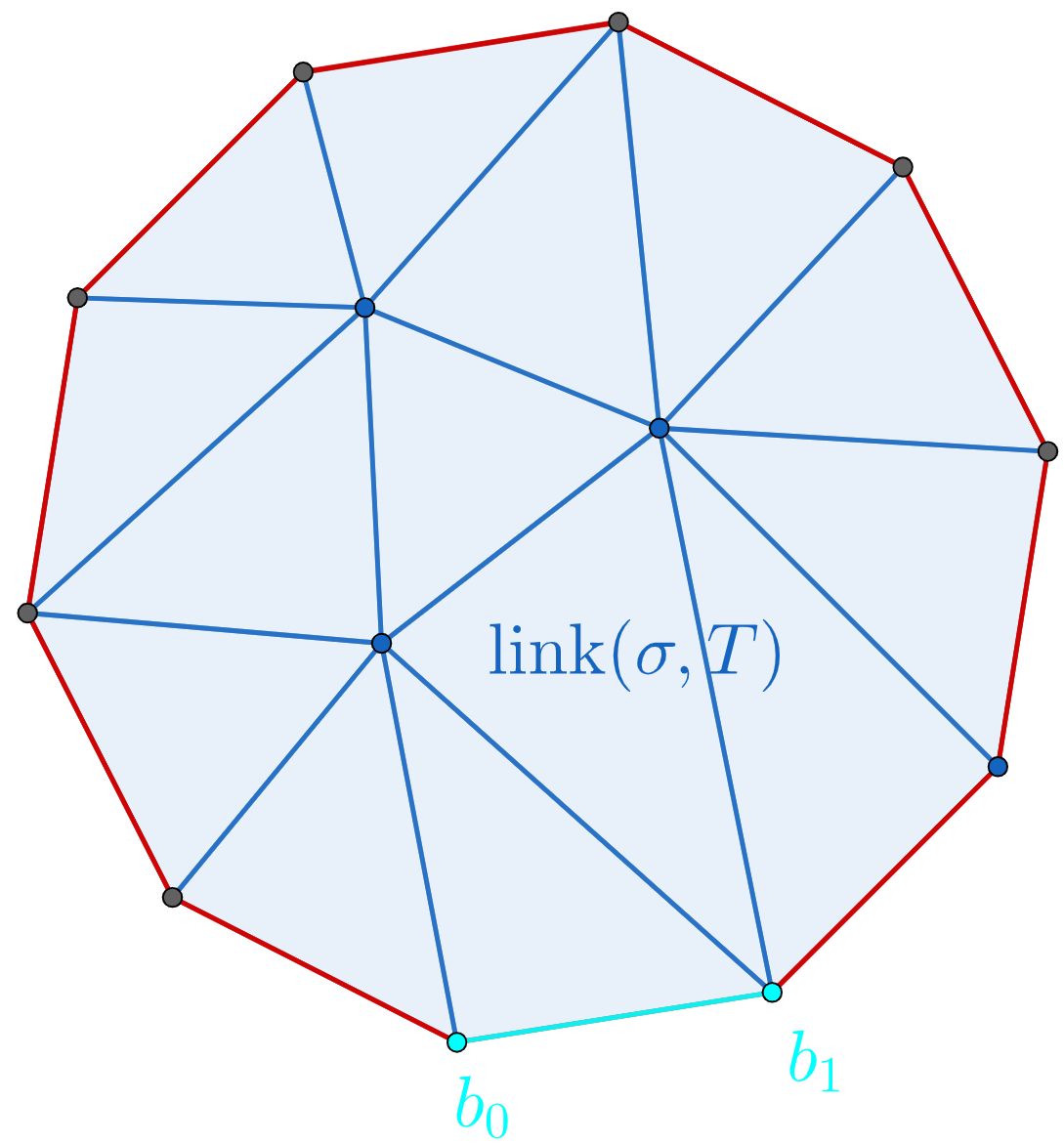}
		\label{fig:Rec2}}
		\hfill
		\subfloat[]{\includegraphics[width=12em]{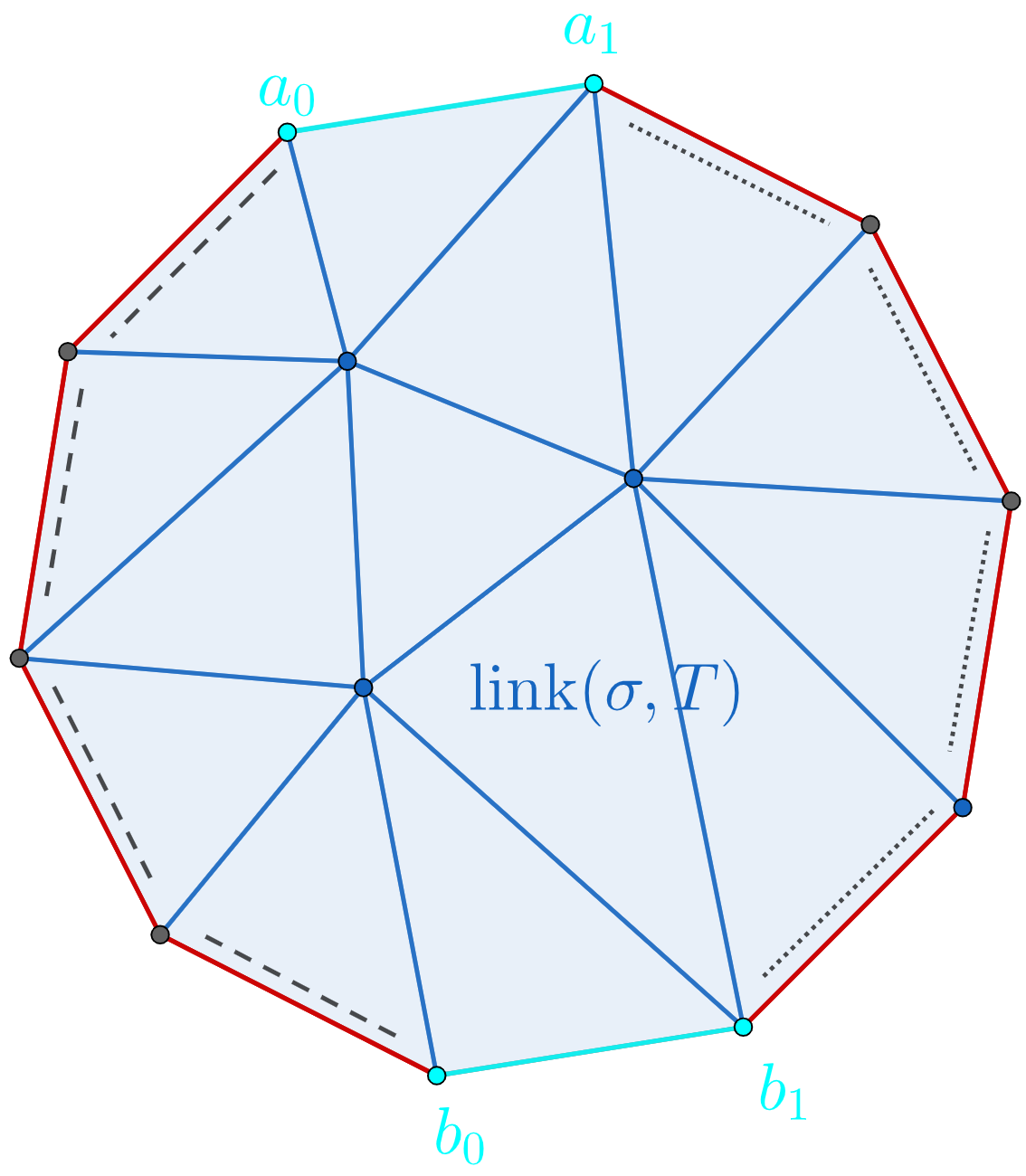}
		\label{fig:Rec1}}
		\caption{Two options for gluing in the very first step.}
		\label{fig:Rec12}
    \end{figure}
When an edge in the link is `left alone', this has no impact on how the remaining edges should be matched. Therefore, the number of gluing configurations with the first gluing of the type (a) is $M(m-1)$.
Instead, when we glue a pair of edges, we divide the boundary of the link of $\sigma$ into two parts (Figure \ref{fig:Rec1}): the first contains $m_1$ edges (dashed lines in the figure), the second $m_2$ edges (dotted lines in the figure), with $m_1+m_2 = m-2$. Since only planar gluings are allowed, the number of gluings with a first gluing of type (b) is thus $M(m_1)\cdot M(m_2)$. So
\begin{equation}   \label{eq:MCompareA}
    M(m) =  M(m-1) + \sum_{m_1+m_2=m-2} M(m_1)M(m_2).
\end{equation}
Set $M(0)=M(1)=1$. By definition, $M(m)\leq M(m+1)$.
Moreover
\begin{equation} \label{eq:MCompareB}
    \sum_{m_1+m_2=m-2} M(m_1)M(m_2)\ \ \leq \sum_{m_1+m_2=m-1} M(m_1)M(m_2) - M(m-1) \ .
\end{equation}
Putting Inequalities (\ref{eq:MCompareA}) and (\ref{eq:MCompareB}) together, we get by induction
\begin{equation}
    \label{eq_true2}
    M(m) \leq \sum_{m_1+m_2=m-1} M(m_1)M(m_2)  \: \:  \leq \: C_m \: < \: 4^m\ . \qedhere 
\end{equation}
\end{proof}

\begin{lemma}
    Consider an intermediate complex $K$ in a $2$-local construction. The number $M(m)$ bounds the number of possible gluing configurations among the boundary facets containing a given $(d-3)$-face of K. 
\end{lemma}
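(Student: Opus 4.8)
The goal is to show that, even though in an intermediate complex $K$ the link of a $(d-3)$-face $\delta$ need no longer be a single disk (it is now topologically a union of possibly punctured disks glued along boundary edges and vertices), the number of gluing configurations among the $m$ boundary facets containing $\delta$ is still at most $M(m)$. The plan is to reduce everything to non-crossing partial matchings of the $m$ boundary edges of $\mathrm{link}(\delta,K)$. By Lemma \ref{lemma:gluings}, any gluing that can lead to a manifold acts only on $\partial\,\mathrm{link}(\delta,K)$, preserves orientability, and is planar; and by Lemma \ref{lemma:connected_components}, it never identifies facets lying in two distinct connected components of $\partial\,\mathrm{link}(\delta,K)$. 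Hence every admissible gluing configuration is a non-crossing (planar) partial matching of the boundary edges that respects the decomposition of $\partial\,\mathrm{link}(\delta,K)$ into its connected boundary components.

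First I would fix this decomposition, writing $\partial\,\mathrm{link}(\delta,K)$ as a disjoint union of connected components $\Gamma_1,\ldots,\Gamma_k$, where $\Gamma_i$ carries $m_i$ boundary edges and $m_1+\cdots+m_k=m$. Since admissible matchings cannot cross between components, the number of configurations factors as the product over $i$ of the number of admissible matchings inside $\Gamma_i$. Inside each $\Gamma_i$ one may replay verbatim the case analysis of the previous lemma: choosing a boundary edge and either leaving it unmatched contributes the count for $m_i-1$ edges, or matching it with a second edge splits the cyclic boundary into two arcs by planarity, producing the recursion
\[ M(m) = M(m-1) + \sum_{m_1+m_2=m-2} M(m_1)M(m_2). \]
Thus the number of admissible matchings inside $\Gamma_i$ is at most $M(m_i)$, and the total number of configurations is at most $\prod_{i=1}^{k} M(m_i)$.

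It then remains to establish the purely combinatorial inequality
\[ \prod_{i=1}^{k} M(m_i) \ \le\ M(m) \qquad \text{whenever } m_1+\cdots+m_k=m. \]
I would prove this by recognizing $M(m)$ as the number of non-crossing partial matchings of $m$ marked points (these are the Motzkin numbers, as the recursion above shows, consistent with the bound $M(m)\le C_m$), and then exhibiting an injection. Placing the edges of $\Gamma_1,\ldots,\Gamma_k$ as $k$ consecutive arcs along a single circle, a choice of a non-crossing partial matching inside each arc pastes together into one non-crossing partial matching of all $m$ edges, and different tuples of choices yield different global matchings. The one point that needs care is that a matching which is non-crossing on the individual component $\Gamma_i$ remains non-crossing after $\Gamma_i$ is cut open and laid on the big circle; this holds because two chords of a circle cross precisely when their endpoints alternate cyclically, a condition invariant under the choice of cut point. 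The main obstacle is exactly this final reduction: one must be sure that the component-wise planar constraints aggregate to the single bound $M(m)$ rather than to something larger. The structural input from Lemmas \ref{lemma:gluings} and \ref{lemma:connected_components} does the heavy lifting by forbidding cross-component and interior identifications, after which the desired bound is just the super-multiplicativity of the Motzkin numbers. I would therefore present the argument in three steps: (1) admissible configurations are component-respecting non-crossing matchings; (2) their number is at most $\prod_i M(m_i)$; (3) $\prod_i M(m_i)\le M(m)$ via the concatenation injection, which yields the claimed bound $M(m)$.
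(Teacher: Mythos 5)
Your proof is correct and follows the same basic skeleton as the paper's: use Lemma \ref{lemma:connected_components} to split the $m$ boundary edges of $\mathrm{link}(\delta,K)$ into groups $S_1,\dots,S_k$ of sizes $m_1,\dots,m_k$ that can only be matched internally, and bound each group by $M(m_i)$ via the recursion of the previous lemma. Where you diverge is in the final step. The paper stops at
\[
M(m_1)\cdots M(m_k) \;\le\; 4^{m_1+\cdots+m_k} \;=\; 4^m,
\]
i.e.\ it proves the bound $4^m$ (which is all that is used in the proof of Theorem \ref{thm:main}), rather than the literal bound $M(m)$ announced in the statement. You instead prove the sharper inequality $\prod_i M(m_i)\le M(m)$ by identifying $M$ with the Motzkin numbers (non-crossing partial matchings) and exhibiting a concatenation injection, noting correctly that cyclic non-crossingness survives cutting each component open onto consecutive arcs of one circle, since crossing is equivalent to cyclic alternation of endpoints. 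So your argument actually establishes the lemma as stated, at the cost of one extra combinatorial step (super-multiplicativity of Motzkin numbers); the paper's version is shorter and suffices for the downstream estimate, since $M(m)\le C_m\le 4^m$ by \eqref{eq:MmBound}. Both treatments share the same small implicit assumption, namely that within each connected piece of $\partial\,\mathrm{link}(\delta,K)$ the admissible gluings are exactly the planar matchings counted by the recursion for $M$; neither proof dwells on components whose boundary is not a single circle, so this is not a gap relative to the paper.
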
 

\begin{proof}
Let $m$ be the number of boundary edges of $\mathrm{link}(\sigma, K)$. By Lemma \ref{lemma:connected_components}, we can partition the edges into sets $S_1$, $S_2$, $\dots$, $S_k$ such that each $S_i$ contains $m_i$ edges that can be glued among themselves but cannot be glued to any edge outside $S_i$. Since $\sum m_i=m$, by (\ref{eq:MmBound}) 
    \[
        M(m_1)M(m_2) \cdots M(m_k) \; \leq  \; 4^{m_1+m_2+ \ldots + m_k} \: = \: 4^{m}. \qedhere
\]
\end{proof}

%%%%%%%%%%%%%%%%%%%%%%%%%%%%%%%%%%%%%%%%%%%%%%%%%%%%%%%%%
\subsection{Proof of the Main Theorem}
%%%%%%%%%%%%%%%%%%%%%%%%%%%%%%%%%%%%%%%%%%%%%%%%%%%%%%%%%

\begin{lemma} \label{lem:DNrel} Let $N$ and $d$ be positive integers, $d \ge 3$. Let $T$ be a tree of $N$  $d$-simplices. Let $N_i$ be the number of boundary $i$-faces in $T$. Set 
$D:= 1 + \frac{N(d-1)}{2}$. Then
    \begin{equation} N_{d-1}=2D, \: \: N_{d-2}=dD, \: \: \textrm{ \em and } \: N_{d-3}= \frac{d}{6} (Nd^2+2N-3Nd+3d-3). 
    \label{eq:DNrel}
    \end{equation} 
    
\end{lemma}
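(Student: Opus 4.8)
The plan is to reduce all three counts to two ingredients: a global tally of how many faces of each dimension a tree of simplices has, and the observation that---apart from the $(d-1)$-faces---none of the faces in question can lie in the interior. First I would fix an ordering $\Delta_1, \ldots, \Delta_N$ of the $d$-simplices compatible with the dual tree, so that for each $i \ge 2$ the simplex $\Delta_i$ is glued to $\Delta_1 \cup \cdots \cup \Delta_{i-1}$ along a single facet $F_i$ (its parent edge in the rooted dual tree). Adding $\Delta_i$ introduces exactly the faces through the apex vertex opposite $F_i$, namely $\binom{d}{k}$ new $k$-faces for each $k$, provided the tree gluing creates no identifications beyond the prescribed facets. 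Writing $f_k(T)$ for the total number of $k$-faces, this gives
\[
 f_k(T) \;=\; \binom{d+1}{k+1} + (N-1)\binom{d}{k}.
\]
In particular $f_{d-1}(T) = Nd+1$. Since the dual tree has $N-1$ edges, exactly $N-1$ of these $(d-1)$-faces are interior and the rest boundary; counting incidences, $(d+1)N = 2(N-1) + N_{d-1}$, so $N_{d-1} = (d-1)N + 2 = 2D$.

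The step I expect to be the crux is the structural claim that \emph{a tree of $d$-simplices has no interior faces of codimension $\ge 2$}. To prove it, let $\sigma$ be an interior face of codimension $c \ge 2$. Because link and boundary commute and $\sigma \notin \partial T$, we get $\partial \operatorname{link}(\sigma,T) = \operatorname{link}(\sigma,\partial T) = \emptyset$, so $L := \operatorname{link}(\sigma,T)$ is a \emph{closed} $(c-1)$-pseudomanifold: it is pure, each of its ridges lies in at most two facets (inherited from $T$), and having empty boundary forces each ridge into exactly two. Now two facets of $L$ are adjacent precisely when the corresponding $d$-simplices share a $(d-1)$-face through $\sigma$; but any two $d$-simplices containing $\sigma$ already meet along $\sigma$, so their common face (if of dimension $d-1$) is their unique shared facet and automatically contains $\sigma$. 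Hence the dual graph of $L$ is the subgraph of the dual tree of $T$ induced on the simplices containing $\sigma$, in particular a forest with at most $V-1$ edges on $V$ vertices. On the other hand, each of the $c$ ridges of each facet of $L$ is shared with exactly one other facet, yielding $cV/2$ edges. The inequality $cV/2 \le V-1$ is impossible for $c \ge 2$ and $V \ge 1$, a contradiction. (Note the argument correctly fails at $c=1$, where interior $(d-1)$-faces do exist.) This establishes $N_{d-2} = f_{d-2}(T)$ and $N_{d-3} = f_{d-3}(T)$.

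It then remains to substitute $k=d-2$ and $k=d-3$ into $f_k(T)$ and simplify. For $N_{d-2}$ one gets $\binom{d+1}{2} + (N-1)\binom{d}{2} = \tfrac{d}{2}\big(N(d-1)+2\big) = dD$, which agrees with the quicker count on the boundary sphere $\partial T$, where each of the $2D$ facets carries $d$ ridges and each boundary ridge lies in exactly two facets. For $N_{d-3}$ one expands $\binom{d+1}{3} + (N-1)\binom{d}{3} = \tfrac{d(d-1)}{6}\big(Nd - 2N + 3\big)$ and checks this equals $\tfrac{d}{6}(Nd^2 + 2N - 3Nd + 3d - 3)$. The only genuinely delicate point is the structural claim of the second paragraph; everything else is routine bookkeeping, and the one thing to verify carefully is that the tree construction introduces no vertex identifications beyond the glued facets, so that the incremental face count is legitimate.
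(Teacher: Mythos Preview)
Your proof is correct and follows essentially the same approach as the paper: both reduce to the observation that every face of dimension $\le d-2$ in a tree of $d$-simplices lies on the boundary, and then perform an elementary face count. The paper simply asserts the structural fact and organizes the count as $N\binom{d+1}{d-2}-(N-1)\binom{d}{d-2}$, whereas you supply a full proof of that fact via the link/forest argument and count incrementally; these are cosmetic differences, and your added rigor only strengthens the write-up.
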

\begin{proof}
    Note that all the the $i$-faces with $i\leq d-2$ are on the boundary of $T$. It is easy to see that $N_{d-1}=2+N(d-1)$, or in other words $N_{d-1}=2D$. By counting, or by \cite{BBLc}, we get
    \begin{equation*}
        N_{d-2}=\frac{d}{2} ( N(d-1)+2 ) =dD.
    \end{equation*}
    Now, any $d$-simplex contains $\binom{d+1}{d-2}=\frac{(d+1)d(d-1)}{6}$ $(d-3)$-faces, and any $(d-1)$-simplex contains $\binom{d}{d-2}=\frac{d(d-1)}{2}$ $(d-3)$-faces. So 
    \begin{equation*}
    \label{eq:dim3faces}
        N_{d-3}=N\frac{(d+1)d(d-1)}{6} - (N-1)\frac{d(d-1)}{2} = \frac{d}{6} (Nd^2+2N-3Nd+3d-3).
    \end{equation*}
\end{proof}

\begin{theorem} \label{thm:main}
    For any $d \geq 3$, the number of combinatorially distinct 2-LC $d$-manifolds 
    with $N$ facets, for $N$ large, is smaller than
\[ 
2^{\frac{d^3}{2}N} .
\]    
\end{theorem}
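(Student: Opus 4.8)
The plan is to bound the number of 2-LC $d$-manifolds with $N$ facets by the product of two counts: the number of combinatorial types of trees of $N$ $d$-simplices, and, for each fixed such tree $T$, the number of boundary matchings that can fold $T$ into a manifold without boundary. Every 2-LC manifold arises from some pair $(T,\mu)$ in this way, so this product is a valid (and very lossy, hence safe) upper bound. The first count is easily dispatched: the dual graph of a tree of $N$ $d$-simplices is a tree on $N$ nodes of maximum degree $d+1$, and each simplex is attached to its neighbours by one of boundedly many facet identifications, so the number of types is at most $c(d)^N$ with $\log_2 c(d)=O(d\log d)$. This is negligible against the target $2^{\frac{d^3}{2}N}$, so the entire difficulty lies in bounding the matchings.

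To bound the matchings for a fixed $T$, I would lean on the structural results established above. Since $M$ is a manifold without boundary, the link of every $(d-3)$-face is a $2$-sphere, and by Lemma \ref{lemma:gluings} only orientation-preserving, planar gluings acting on the boundary circles $\partial\operatorname{link}(\sigma,T)$ can occur; by Lemma \ref{lemma:connected_components} no gluing may merge two components of such a boundary circle. Crucially, by the final assertion of Lemma \ref{lemma:gluings}, the whole matching $\mu$ is recorded by the edge-matchings it induces on the circles $\partial\operatorname{link}(\sigma,T)$ as $\sigma$ runs over the $(d-3)$-faces; and for a single $\sigma$ lying in $m=m_\sigma$ boundary facets, the number of admissible non-crossing edge-matchings is the quantity $M(m)$, bounded in (\ref{eq:MmBound}) by the Catalan number $C_m\le 4^m$. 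Thus the number of matchings is at most $\prod_\sigma M(m_\sigma)\le 4^{\sum_\sigma m_\sigma}$, and by Lemma \ref{lem:DNrel} the exponent is governed by $\sum_\sigma m_\sigma=N_{d-1}\binom{d}{2}=2D\binom{d}{2}$, whose leading term is $\tfrac12 d(d-1)^2N$.

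Assembling these counts is the final step, and it is here that the main obstacle appears. The naive estimate $4^{\sum_\sigma m_\sigma}$ has leading exponent $d(d-1)^2N\approx d^3N$, which overshoots the target by a factor of $2$ in the exponent. The saving must come from the closed-manifold hypothesis itself: because $M$ has empty boundary, every boundary facet of $T$ is eventually identified, so the edges of each circle $\partial\operatorname{link}(\sigma,T)$ are ultimately consumed in matched pairs rather than left alone. For a genuinely perfect non-crossing matching of $m$ edges one counts only $C_{m/2}\le 2^m$ configurations, not the $C_m\le 4^m$ of the general partial matchings that are permitted at intermediate steps; replacing $4^{m_\sigma}$ by $2^{m_\sigma}$ yields $\prod_\sigma 2^{m_\sigma}=2^{\sum_\sigma m_\sigma}$, whose leading exponent $\tfrac12 d(d-1)^2N$ lies strictly below $\tfrac{d^3}{2}N$, with ample room to absorb the tree factor $c(d)^N$ and the lower-order terms of (\ref{eq:DNrel}) for $N$ large. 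The hard part, and the step I would spend the most care on, is making this reduction rigorous: one must reconcile the ``perfect matching'' heuristic with the partial matchings that genuinely occur during a 2-local construction, and with the gluings that, via identification of two distinct $(d-3)$-faces, match edges \emph{across} two different links rather than within a single one. Controlling precisely how each 2-LC gluing is charged to the links it affects — so that it is paid for once rather than once per incident facet — is exactly what converts the loose factor $4$ into the sharp factor $2$ and delivers the bound $2^{\frac{d^3}{2}N}$.
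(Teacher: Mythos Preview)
Your outline correctly isolates the two factors (trees times matchings) and the role of the planarity of the $(d-3)$-links, but the matching count has a real gap that you yourself flag without closing. The claim that the full matching $\mu$ is recorded by the product $\prod_{\sigma} M(m_\sigma)$ of within-link edge-matchings, with $\sigma$ ranging over the $(d-3)$-faces \emph{of the original tree} $T$, is not what the final clause of Lemma~\ref{lemma:gluings} asserts and is not true as stated: two boundary facets $F_1,F_2$ that share no $(d-3)$-face in $T$ may acquire a common one only after earlier gluings have merged two distinct $(d-3)$-faces, and then be $2$-LC glued. Such a pairing is not an edge-matching inside any single $\partial\operatorname{link}(\sigma,T)$, so the static product over faces of $T$ is not an upper bound on the number of matchings. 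Your proposed sharpening from $4^{m_\sigma}$ to $2^{m_\sigma}$ via ``the final matching is perfect'' compounds the problem: within any fixed original link the induced matching is \emph{not} perfect, precisely because some of its edges pair with edges of other links after merging, so the count $C_{m/2}$ does not apply. You acknowledge both issues in your last paragraph but resolve neither.

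The paper handles exactly this dynamic difficulty with a \emph{rounds} decomposition in the spirit of Durhuus--Jonsson: round~$1$ consists of pairs already sharing a $(d-3)$-face in $T$; round~$i{+}1$ consists of pairs that acquire a common $(d-3)$-face only as a result of some round-$i$ gluing. The key observation is that each of the $m_{i}/2$ gluings in round~$i$ creates at most $\tfrac{d(d-1)}{2}-1$ newly identified $(d-3)$-faces, so the pool of faces available for round~$i{+}1$ has size at most $L_{i+1}=\tfrac{m_i}{2}\bigl(\tfrac{d(d-1)}{2}-1\bigr)$. Summing the cost of selecting the active faces ($2^{N_{d-3}}\prod_{i\ge 2}2^{L_i}$) and of the planar matchings within them ($4^{\sum_i m_i}$ with $\sum_i m_i=2D$), and then summing over the number and sizes of the rounds, gives a leading exponent of $\tfrac{5}{12}d^3N$, comfortably below $\tfrac12 d^3 N$. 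This rounds bookkeeping is precisely the mechanism that turns your ``charge each gluing once rather than once per incident link'' intuition into a proof; without it, or some substitute, the argument does not go through.
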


\begin{proof} By \cite[Corollary 4.3]{BBLc}, the number of trees with $N$ facets is at most 
\begin{equation}
\label{eq:numberOfTrees}
    \left( d \left(\frac{d}{d-1}\right)^{d-1} \right)^N < (de)^N\ .    
\end{equation}
    So we want to obtain an exponential upper bound for all manifolds obtainable from a given, fixed tree. Any tree $T$ of $N$ $d$-simplices has $2D= 2+N(d-1)$ boundary facets. Hence $D$ disjoint couples need to be glued together. Following \cite{Durhuus}, we partition any such perfect matching into rounds. The first round consists of couples that share a $(d-3)$-face in the boundary of the initial tree. Recursively, the $(i+1)$-st round consists of all couples that get to have common $(d-3)$-faces only after a gluing in the $i$-th round. 
    Denote by  $m_1$ (respectively, by $m_{1,\sigma}$) the number of the boundary facets glued in the first round (respectively, glued in the first round with respect to $\sigma$). Choose a set $\mathfrak{F}$ of $n_1$ $(d-3)$-faces, the ones with respect to which we shall perform the first round of gluings. We have
    \begin{equation*}
        \sum_{\sigma \in \mathfrak{F}} m_{1,\sigma} \: = \: m_1 \ .
    \end{equation*}
    Therefore, the number of possible gluing configurations in the initial tree, while fixing the $n_1$ $(d-3)$-faces in $\mathfrak{F}$, is bounded by 
    \begin{equation}
        \prod_{\sigma \in \mathfrak{F}} M(m_{1,\sigma}) \quad 
        \leq \prod_{\sigma \in \mathfrak{F}} 4^{m_{1,\sigma}} = 4^{m_1} \ .
    \end{equation}
    	There are $\binom{N_{d-3}}{n_1}$ ways to choose the $n_1$ $(d-3)$-faces in $\mathfrak{F}$. As $n_1 \leq N_{d-3}$, the number of possibilities in the first round is therefore at most
    \begin{equation}
    	\sum_{n_1=1}^{\frac{m_1}{2}} \binom{N_{d-3}}{n_1} 4^{m_1} < \sum_{n_1=0}^{N_{d-3}} \binom{N_{d-3}}{n_1} 4^{m_1} = 2^{N_{d-3}} 4^{m_1}.
    \end{equation}
Now for each couple in the first round, at most $\Big(\frac{d(d-1)}{2}-1 \Big)$ distinct $(d-3)$-faces become identified. So the total number of identified $(d-3)$-faces in the first round is at most
	\begin{equation*}
	    L_2:=\frac{m_1}{2}\left ( \frac{d(d-1)}{2}-1 \right ).    
	\end{equation*}
	We select $n_2$ out of these $L_2$ $(d-3)$-faces for the second round. The number of possible gluing configurations in the second round of gluings, while fixing the $n_2$ $(d-3)$-faces, is bounded by 
    \begin{equation}
        \prod_{\sigma \text{ is one the $n_2$ $(d-3)$-faces}} 4^{m_{2,\sigma}} = 4^{m_2} \ .
    \end{equation}
	The number of options for the second round is therefore at most 
    \begin{equation*}
        \sum_{n_2=1}^{L_2} \binom{L_2}{n_2} 4^{m_2} < \sum_{n_2=0}^{L_2} \binom{L_2}{n_2} 4^{m_2} = 2^{L_2} \, 4^{m_2}.
    \end{equation*}
    
    The same way, the number of possibilities in the $i$-th round, $i\geq 2$, is at most 
    \begin{equation}
        \sum_{n_i=1}^{L_{i}} \binom{L_{i}}{n_i} 4^{m_i} < 2^{L_{i}} \, 4^{m_i} , 
    \end{equation}
    where \[L_i := \frac{m_{i-1}}{2} \left( \frac{d(d-1)}{2}-1 \right).\]
    
    The following is therefore an upper bound on total number of 2-LC $d$-manifolds without boundary that can be constructed from a given tree:
   
    \begin{align}
        \notag
        \sum_{f=1}^{D} \quad & \sum_{\substack{m_1, m_2, \dots, m_f \\ \sum m_i = 2D \\ m_i \text{ even, } m_i\geq2 }} \ \left( \sum_{n_1=0}^{N_{d-3}} \binom{N_{d-3}}{n_1} 4^{m_1} \right) \left( \sum_{n_2=0}^{L_2} \binom{L_2}{n_2} 4^{m_2} \right) \cdots \left( \sum_{n_f=0}^{L_{f}} \binom{L_{f}}{n_f} 4^{m_f} \right) \\
        \notag
        &\leq \ \sum_{f=1}^{D} \quad \sum_{\substack{m_1, m_2, \dots, m_f \\ \sum m_i = 2D \\ m_i \text{ even, } m_i\geq2 }} 2^{N_{d-3}} \cdot  2^{\sum_{i=2}^{f} L_i} \cdot 4^{\sum_{i=1}^{f} m_i} \\
        \label{eq:bound3}
        &\leq \ \sum_{f=1}^{D} \  2^{\frac d6 (Nd^2+2N-3Nd+3d-3)} \cdot 2^{D \left (\frac{d(d-1)}{2}-1\right) } \cdot  4^{2D} \; \cdot \;  \sum_{\substack{m_1, m_2, \dots, m_f \\ \sum m_i = 2D \\ m_i \text{ even, } m_i\geq2 }} 1 \\
        \label{eq:bound4}
        &= \ 2^{\frac d6 (Nd^2+2N-3Nd+3d-3)  + D \left(\frac{d(d-1)}{2}-1\right)  +4D}  \; \cdot \;  \sum_{f=1}^{D} \binom{D-1}{f-1}  \\
        \label{eq:bound5}
         &=\ 2^{N\frac{d-1}{12}(5d^2-7d+24) +d(d-1)+3} \ .
    \end{align}
    Some explanation: Inequality (\ref{eq:bound3}) follows from the relations (\ref{eq:DNrel}) and from
    \begin{equation*}
        \sum_{i=1}^{f} m_i = 2D \quad \text{and }\quad \sum_{i=2}^{f} L_i = \frac{1}{2} \sum_{i=2}^{f} m_i \left ( \frac{d(d-1)}{2} -1\right )  \leq  \frac{1}{2} \,  2D \, \left( \frac{d(d-1)}{2} -1\right ) \ ,    
    \end{equation*}
    which hold because at the end, we glue all the $2D$ boundary faces. The inequality (\ref{eq:bound4}) follows from the fact that the number of compositions of $2D$ with $f$ parts all even, is the same as the number of compositions of $D$ with $f$ parts, which is $\binom{D-1}{f-1}$. Equality (\ref{eq:bound5}) is by Newton's binomial formula $\sum_{f=1}^{D}\binom{D-1}{f-1} = 2^{D-1}$, and the definition of $D$. 
       Consequently, via Inequality \ref{eq:numberOfTrees}, the number of 2-LC $d$-manifolds without boundary with $N$ facets is at most
    \begin{equation*}
       (d \, e)^N \cdot 2^{N \frac{d-1}{12}(5d^2-7d+24)} \cdot {2^{d(d-1)+3}}\: \:  < \: \: 2^{N\frac{5d^3}{12}} \cdot {2^{d(d-1)+3}}\: \:  < \: \: 2^{N\frac{d^3}{2}} .
    \end{equation*}
    The same proof works also for $d$-manifolds with boundary: we simply stop the matching process earlier. \qedhere
\end{proof}

\begin{remark} \label{rem:pseudo} The exponential bound of Theorem \ref{thm:main} cannot be extended from manifolds to pseudomanifolds. In fact, already for  $d = 3$, the family of the cones $v\ast S$, where $S$ is any trangulated surface  and $v$ is a new vertex, shows that 2-LC $3$-pseudomanifolds are more than exponentially many. Nevertheless, it is possible to expand Theorem \ref{thm:main} by only allowing those 2-LC pseudomanifolds that are obtained with gluings that satisfy all the conditions from Lemmas \ref{lemma:gluings} and \ref{lemma:connected_components}.   Formally: 
\end{remark}

\begin{definition} A \emph{$2$-LC quasimanifold} is any pseudomanifold obtainable from a tree of $d$-simplices by performing only 2-LC gluings that are 
    \begin{compactitem}
            \item orientable with respect to the involved links of $(d-3)$-faces,
            \item planar with respect to the involved links of $(d-3)$-faces,
            \item with an impact only on the boundaries of the links of $(d-3)$-faces, and
            \item not forcing any identification between two distinct connected components of the boundary of the link of a $(d-3)$-face.
    \end{compactitem}
    Moreover, we require any two $(d-1)$-faces that share an interior edge of the link of a $(d-3)$-face to be glued together. With the same proof of Theorem \ref{thm:main}, we conclude the following:
\end{definition}

\begin{theorem} \label{thm:mainQuasi}
    For any $d \geq 3$, the number of combinatorially distinct $2$-LC $d$-quasimanifolds 
    with $N$ facets, for $N$ large, is smaller than
\[ 
2^{\frac{d^3}{2}N} .
\]    
\end{theorem}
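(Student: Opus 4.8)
The plan is to observe that Theorem \ref{thm:mainQuasi} follows by re-running the entire counting argument of Theorem \ref{thm:main} essentially verbatim, with the key point being that the counting procedure in the proof of Theorem \ref{thm:main} never actually used the manifold hypothesis \emph{as a constraint on the enumeration} — it only used it to \emph{justify discarding} certain gluings. More precisely, the manifold condition entered the proof of Theorem \ref{thm:main} exclusively through Lemmas \ref{lemma:gluings} and \ref{lemma:connected_components}, which establish that the only gluings capable of producing a manifold without boundary are the orientable, planar, boundary-impacting ones that do not merge distinct boundary components of links of $(d-3)$-faces, and moreover that any two $(d-1)$-faces sharing an interior edge of such a link are forced to be glued. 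Since the definition of a $2$-LC quasimanifold simply \emph{imposes} exactly these four conditions plus the forced-gluing requirement as part of its definition, every quasimanifold is obtained by a sequence of gluings lying in the same restricted class that the bound of Theorem \ref{thm:main} was already counting.

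First I would state explicitly that the set of gluing sequences permitted in constructing a $2$-LC quasimanifold coincides with the set of gluing sequences that the proof of Theorem \ref{thm:main} counts: in that proof, the estimate $M(m_{i,\sigma}) \le 4^{m_{i,\sigma}}$ together with the partition into connected components (via Lemma \ref{lemma:connected_components}) bounds the number of \emph{allowed} matchings per $(d-3)$-face per round, and these ``allowed'' matchings are precisely the planar, orientable, component-respecting ones. The forced identification of $(d-1)$-faces sharing an interior link edge is exactly what the final paragraph of Lemma \ref{lemma:gluings} used to argue that such configurations do not add to the count. So the quasimanifold definition has been engineered so that its building blocks match the objects being enumerated.

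Next I would carry the identical bookkeeping through: fix a tree $T$, note it has $2D = 2 + N(d-1)$ boundary facets by Lemma \ref{lem:DNrel}, partition the perfect matching into rounds as in \cite{Durhuus}, and bound each round by $2^{L_i} 4^{m_i}$ exactly as before, using $N_{d-3}$ from (\ref{eq:DNrel}) for the first round. Summing over the number of rounds $f$ and over compositions of $2D$ into even parts yields the same chain of inequalities (\ref{eq:bound3})–(\ref{eq:bound5}), and multiplying by the tree count (\ref{eq:numberOfTrees}) gives the same final estimate below $2^{\frac{d^3}{2}N}$. The only substantive thing to verify is that nothing in this chain invoked a manifold-specific fact beyond the combinatorial gluing restrictions now baked into the definition.

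The main obstacle — and the only place requiring genuine care rather than transcription — is confirming that Lemmas \ref{lemma:gluings} and \ref{lemma:connected_components} remain valid as \emph{counting} statements rather than \emph{manifold-necessity} statements. Their original proofs derive the four conditions as \emph{consequences} of targeting a manifold via Jordan--Schoenflies on the $S^2$ links of $(d-3)$-faces; for the quasimanifold version one must check that the bound $M(m) \le 4^m$ and the component-partition bound $M(m_1)\cdots M(m_k) \le 4^m$ hold simply because we are counting \emph{planar, component-respecting matchings of boundary edges of a disk-like link}, independently of whether the final object is a manifold. This is immediate: the recursion (\ref{eq:MCompareA}) for $M(m)$ counts precisely planar matchings (Catalan-bounded), and imposing the connected-component restriction only decreases the count, so the estimates transfer unchanged. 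Hence the proof of Theorem \ref{thm:main} applies word for word, and the bound $2^{\frac{d^3}{2}N}$ holds for $2$-LC quasimanifolds as claimed.
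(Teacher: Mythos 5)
Your proposal is correct and matches the paper's own treatment: the paper proves Theorem \ref{thm:mainQuasi} simply by noting that the definition of a $2$-LC quasimanifold was engineered to impose exactly the gluing restrictions of Lemmas \ref{lemma:gluings} and \ref{lemma:connected_components}, so the counting argument of Theorem \ref{thm:main} applies verbatim. Your additional observation that the bounds $M(m)\le 4^m$ and the component-partition estimate are purely combinatorial statements about planar matchings, independent of the manifold hypothesis, is exactly the right point to make explicit.
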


\begin{example} \label{ex:nonLC}
In the boundary of a tree of tetrahedra, choose two triangles $\sigma_1$ and $\sigma_2$ that intersect at a vertex and that  do not belong to the same tetrahedron or to adjacent tetrahedra. Let $P$ be the $3$-dimensional pseudomanifold obtained by gluing $\sigma_1$ and $\sigma_2$. Then $P$ is a $2$-LC quasimanifold. However, $P$ cannot be LC, because it is not homeomorphic to any of the possible topologies of LC $3$-pseudomanifolds, as characterized by Durhuus and Jonsson \cite[Theorem 2]{Durhuus}. This example highlights how the class of $2$-LC $d$-quasimanifolds, bounded by Theorem \ref{thm:mainQuasi}, is much larger than the class of LC $d$-quasimanifolds, already for $d=3$.
\end{example}

%%%%%%%%%%%%%%%%%%%%%%%%%%%%%%%%%%%%%%%%%%%%%%%%%%%%%%%%%
% Bibliography

\begin{remark} \label{ex:nonLC} 
We conjecture that $3$-LC $d$-manifolds are more than exponentially many for every $d \ge 3$. Note that a much stronger statement, ``$3$-LC $d$-spheres are more than exponentially many for every $d \ge 3$'', would be immediately implied, via suspensions, by a positive solution to Gromov's problem of whether there are more than exponentially many $3$-spheres.
\end{remark}

\end{document}